\documentclass[11pt, final]{article}
\usepackage{a4}

\usepackage{amsmath}
\usepackage{amssymb}
\usepackage{amsxtra}
\usepackage{amsthm}
\usepackage{bbm}			
\usepackage{cite}

\usepackage[left=3.5cm,right=3.5cm,top=3cm,bottom=3cm]{geometry}

\usepackage[usenames,dvipsnames]{xcolor}
\usepackage[plainpages=false,pdfpagelabels,colorlinks=true,linkcolor=RoyalPurple,citecolor=Magenta]{hyperref} 

\sloppy
\usepackage{showkeys}
\usepackage[latin1]{inputenc}       
\usepackage[T1]{fontenc}        
\usepackage{lmodern}            
\usepackage{graphicx}
\usepackage{subfig}		
\usepackage{booktabs}		
\usepackage{multirow}
\usepackage{enumitem}		
\usepackage{listings} 	
\usepackage{mathtools}  
\usepackage{floatrow}		

\usepackage{colortbl}
\definecolor{dunkelgrau}{rgb}{0.8,0.8,0.8}
\definecolor{hellgrau}{rgb}{0.9,0.9,0.9}

\newcommand{\R}{\ensuremath{\mathbb{R}}}
\newcommand{\N}{\ensuremath{\mathbb{N}}}

\newcommand{\oR}{\ensuremath{\overline{\mathbb{R}}}}

\renewcommand{\>}{\right\rangle}
\newcommand{\<}{\left\langle}
\newcommand{\Prox}{\ensuremath{\text{Prox}}}
\newcommand{\id}{\ensuremath{\text{Id}}}
\renewcommand{\O}{\ensuremath{\mathcal{O}}}
\newcommand{\iso}{\ensuremath{\text{iso}}}
\newcommand{\aniso}{\ensuremath{\text{aniso}}}

\newcommand{\bx}{\ensuremath{\overline{x}}}

\newcommand{\bv}{\ensuremath{\overline{v}}}

\newcommand{\bp}{\ensuremath{\overline{p}}}

\renewcommand{\bv}{\ensuremath{\overline{v}}}

\newcommand{\fx}{\ensuremath{\boldsymbol{x}}}
\newcommand{\fv}{\ensuremath{\boldsymbol{v}}}
\newcommand{\fbv}{\ensuremath{\boldsymbol{\overline{v}}}}

\newcommand{\fbx}{\ensuremath{\boldsymbol{\overline{x}}}}
\newcommand{\fp}{\ensuremath{\boldsymbol{p}}}
\newcommand{\fq}{\ensuremath{\boldsymbol{q}}}
\newcommand{\fr}{\ensuremath{\boldsymbol{r}}}
\newcommand{\fy}{\ensuremath{\boldsymbol{y}}}
\newcommand{\fz}{\ensuremath{\boldsymbol{z}}}
\newcommand{\fL}{\ensuremath{\boldsymbol{L}}}
\newcommand{\ff}{\ensuremath{F}}
\newcommand{\fg}{\ensuremath{G}}
\newcommand{\fA}{\ensuremath{\boldsymbol{A}}}
\newcommand{\fB}{\ensuremath{\boldsymbol{B}}}
\newcommand{\fC}{\ensuremath{\boldsymbol{C}}}

\newcommand{\fG}{\ensuremath{\boldsymbol{\mathcal{G}}}}
\newcommand{\fH}{\ensuremath{\boldsymbol{\mathcal{H}}}}

\newcommand{\g}{\ensuremath{\mathcal{G}}}
\newcommand{\h}{\ensuremath{\mathcal{H}}}

\newcommand{\Y}{\ensuremath{\mathcal{Y}}}
\newcommand{\proj}{\ensuremath{\mathcal{P}}}

\renewcommand{\Box}{\ensuremath{\mbox{\small$\,\square\,$}}}

\theoremstyle{plain}
\newtheorem{theorem}{Theorem}[section]

\newtheorem{proposition}[theorem]{Proposition}

\theoremstyle{definition}
\newtheorem{remark}{Remark}[section]
\newtheorem{example}{Example}[section]

\newtheorem{problem}{Problem}[section]
\newtheorem{algorithm}{Algorithm}[section]

\DeclareMathOperator*\dom{dom}%
\DeclareMathOperator*\gra{gra}%
\DeclareMathOperator*\argmin{arg\,min}%
\DeclareMathOperator*\ran{ran}%

\numberwithin{equation}{section}  

\title{Convergence analysis for a primal-dual monotone + skew splitting algorithm with applications to total variation minimization}

\author{Radu Ioan Bo\c t
\thanks {Faculty of Mathematics, Chemnitz University of Technology, D-09107 Chemnitz, Germany, e-mail: radu.bot@mathematik.tu-chemnitz.de. Research partially supported by DFG (German Research Foundation), project BO 2516/4-1.}
\and Christopher Hendrich
\thanks{Faculty of Mathematics, Chemnitz University of Technology, D-09107 Chemnitz, Germany, e-mail: christopher.hendrich@mathematik.tu-chemnitz.de. Research supported by a Graduate Fellowship of the Free State Saxony, Germany.}
}
\date{\today}

\begin{document}
\maketitle

{\bf Abstract.} In this paper we investigate the convergence behavior of a primal-dual splitting method for solving monotone inclusions involving mixtures of composite, Lipschitzian and parallel sum type operators proposed by Combettes and Pesquet in \cite{ComPes12}. Firstly, in the particular case of convex minimization problems, we derive convergence rates for the sequence of objective function values by making use of conjugate duality techniques. Secondly, we propose for the general monotone inclusion problem two new schemes which accelerate the sequences of primal and/or dual iterates, provided strong monotonicity assumptions for some of the involved operators are fulfilled. Finally, we apply the theoretical achievements in the context of different types of image restoration problems solved via total variation regularization.

{\bf Keywords.} splitting method, Fenchel duality, convergence statements, image processing

{\bf AMS subject classification.} 90C25, 90C46, 47A52

\section{Introduction and preliminaries}\label{sectionIntro}

The last few years have shown a rising interest in solving structured nondifferentiable convex optimization problems within the framework of the theory of conjugate functions. Applications in fields like signal and image processing, location theory and supervised machine learning motivate these efforts.

In this article we investigate and improve the convergence behavior of the primal-dual monotone + skew splitting method for solving monotone inclusions which was proposed by Combettes and Pesquet in \cite{ComPes12}, itself being an extension of the algorithmic scheme from \cite{BriCom11} obtained by allowing also Lipschitzian monotone operators and parallel sums in the problem formulation. In the mentioned works, by means of a product space approach, the problem is reduced to the one of finding the zeros of the sum of a Lipschitzian monotone operator with a maximally monotone operator. The latter is solved by using an
error-tolerant version of Tseng's algorithm which has forward-backward-forward characteristics and allows to access the monotone Lipschitzian operators via explicit forward steps, while set-valued maximally monotone operators are processed via their resolvents. A notable advantage of this method is given by both its highly parallelizable character, most of its steps could be executed independently, and by the fact that allows to process maximal monotone operators and linear bounded operators separately, whenever they occur in the form of precompositions in the problem formulation.

Before coming to the description of the problem formulation and of the algorithm from \cite{ComPes12}, we introduce some preliminary notions and results which are needed throughout the paper.

We are considering the real Hilbert spaces $\h$ and $\g_i$, $i=1,\ldots,m$, endowed with the \textit{inner product} $\left\langle \cdot ,\cdot \right\rangle$ and associated \textit{norm} $\left\| \cdot \right\| = \sqrt{\left\langle \cdot, \cdot \right\rangle}$, for which we use the same notation, respectively, as there is no risk of confusion. The symbols $\rightharpoonup$ and $\rightarrow$ denote weak and strong convergence, respectively. By $\R_{++}$ we denote the set of strictly positive real numbers, while the \textit{indicator function} of a set $C \subseteq \h$ is $\delta_C : \h \rightarrow  \oR := \R \cup \left\{ \pm \infty \right\}$, defined by $\delta_C(x) = 0$ for $x \in C$ and $\delta_C(x) = +\infty$, otherwise. For a function $f: \h \rightarrow \oR$ we denote by $\dom f := \left\{ x \in \h : f(x) < +\infty \right\}$ its \textit{effective domain} and call $f$ \textit{proper} if $\dom f \neq \varnothing$ and $f(x)>-\infty$ for all $x \in \h$. Let be
$$\Gamma(\h) := \{f: \h \rightarrow \overline \R: f \ \mbox{is proper, convex and lower semicontinuous}\}.$$
The \textit{conjugate function} of $f$ is $f^*:\h \rightarrow \oR$, $f^*(p)=\sup{\left\{ \left\langle p,x \right\rangle -f(x) : x\in\h \right\}}$ for all $p \in \h$ and, if $f \in \Gamma(\h)$, then $f^* \in \Gamma(\h)$, as well. The \textit{(convex) subdifferential} of $f: \h \rightarrow \oR$ at $x \in \h$ is the set $\partial f(x) = \{p \in \h : f(y) - f(x) \geq \left\langle p,y-x \right\rangle \ \forall y \in \h\}$, if $f(x) \in \R$, and is taken to be the empty set, otherwise. For a linear continuous operator $L_i: \h \rightarrow \g_i$, the operator $L_i^*: \g_i \rightarrow \h$, defined via $\< L_ix,y  \> = \< x,L_i^*y  \>$ for all $x \in \h$ and all $y \in \g_i$, denotes its \textit{adjoint operator}, for $i\in \{1,\ldots,m\}$.

Having two functions $f,\,g : \h \rightarrow \oR$, their \textit{infimal convolution} is defined by $f \Box g : \h \rightarrow \oR$, $(f \Box g) (x) = \inf_{y \in \h}\left\{ f(y) + g(x-y) \right\}$ for all $x \in \h$, being a convex function when $f$ and $g$ are convex.

Let $M:\h \rightarrow 2^{\h}$ be a set-valued operator. We denote by $\gra M = \{ (x,u) \in \h \times \h : u \in Mx\}$ its \textit{graph} and by $\ran M =\{u \in \h : \exists x \in \h,\ u\in Mx\}$ its \textit{range}. The \textit{inverse operator} of $M$ is defined as $M^{-1} :\h \rightarrow 2^{\h}, M^{-1}(u) = \{x \in H: u \in Mx\}$. The operator $M$ is called \textit{monotone} if $\< x-y,u-v \> \geq 0$ for all $(x,u),\,(y,v) \in \gra M$ and it is called \textit{maximally monotone} if there exists no monotone operator $M':\h \rightarrow 2^{\h}$ such that $\gra M'$ properly contains $\gra M$ . The operator $M$ is called $\rho$-strongly monotone, for $\rho \in \R_{++}$, if $M-\rho \id$ is monotone, i.\,e. $\< x-y,u-v \> \geq \rho \| x-y \|^2$  for all $(x,u),\,(y,v) \in \gra M$,
where $\id$ denotes the identity on $\h$. The operator $M : \h \rightarrow \h$ is called $\nu$-Lipschitzian for $\nu \in \R_{++}$ if it is single-valued and it fulfills $\|Mx-My\| \leq \nu \|x-y\|$ for all $x,y \in \h$.

The resolvent of a set-valued operator $M:\h \rightarrow 2^{\h}$ is $J_M:\h \rightarrow 2^{\h}, J_M = \left( \id + M \right)^{-1}$. When $M$ is maximally monotone, the resolvent is a single-valued, $1$-Lipschitzian and maximal monotone operator.  Moreover, when $f \in \Gamma(\h)$ and $\gamma \in \R_{++}$, $\partial (\gamma f)$ is maximally monotone (cf. \cite[Theorem 3.2.8]{Zalinescu02}) and it holds $J_{\gamma \partial f} = \left(\id + \gamma \partial f \right)^{-1} = \Prox_{\gamma f}$. Here, $\Prox_{\gamma f}(x)$ denotes the \textit{proximal point} of parameter $\gamma$ of $f$ at $x \in \h$ and it represents the unique optimal solution of the optimization problem
\begin{equation}\label{prox-def}
\inf_{y\in \h}\left \{f(y)+\frac{1}{2\gamma}\|y-x\|^2\right\}.
\end{equation}
For a nonempty, convex and closed set $C \subseteq \h$ and $\gamma \in \R_{++}$ we have $\Prox_{\gamma \delta_C} = \mathcal{P}_C$, where
$\mathcal{P}_C : \h \rightarrow C$, $\mathcal{P}_C(x) = \argmin_{z\in C}\left\| x-z \right\|$, denotes the \textit{projection operator} on $C$.

Finally, the \textit{parallel sum} of two set-valued operators $M_1, M_2: \h \rightarrow 2^{\h}$ is defined as
$$M_1 \Box M_2 : \h \rightarrow 2^{\h},  M_1 \Box M_2  = \left(M_1^{-1} + M_2^{-1}\right)^{-1}.$$
We can formulate now the monotone inclusion problem which we investigate in this paper (see \cite{ComPes12}).
\begin{problem}
\label{opt-problem-inclusion-full}
Consider the real Hilbert spaces $\h$ and $\g_i, i=1,...,m,$ $A:\h \rightarrow 2^{\h}$ a maximally monotone operator and $C:\h \rightarrow \h$ a monotone and $\mu$-Lipschitzian operator for some $\mu \in \R_{++}$. Furthermore, let $z \in \h$ and for every $i\in\{1,\ldots,m\}$, let $r_i \in \g_i$, let $B_i : \g_i \rightarrow 2^{\g_i}$ be maximally monotone operators, let $D_i:\g_i \rightarrow 2^{\g_i}$ be monotone operators such that $D_{i}^{-1}$ is $\nu_i$-Lipschitzian for some $\nu_i \in \R_{++}$, and let $L_i : \h \rightarrow \g_i$ be a nonzero linear continuous operator. The problem is to solve the primal inclusion
\begin{align}
	\label{opt-problem-primal-inclusion-full}
	\text{find }\bx \in \h \text{ such that } z \in A\bx + \sum_{i=1}^m L_i^* \left( (B_i\Box D_i)(L_i \bx-r_i) \right) + C\bx,
\end{align}
together with the dual inclusion
\begin{align}
	\label{opt-problem-dual-inclusion-full}
	\text{find }\bv_1 \in \g_1,\ldots,\bv_m \in \g_m \text{ such that }(\exists x\in\h)\left\{
	\begin{array}{l}
		z - \sum_{i=1}^m L_i^*\bv_i \in Ax + Cx \\
		\!\!\bv_i \!\in \!(B_i \Box D_i)(L_ix-r_i), \!i=1,\ldots,m.
	\end{array}
\right.
\end{align}
\end{problem}
Throughout this paper we denote by  $\fG:=\g_1 \times... \times \g_m$
the Hilbert space equipped with the inner product
$$\langle (p_1,\ldots,p_m), (q_1,\ldots,q_m) \rangle =  \sum_{i=1}^m \langle p_i,q_i \rangle \ \forall (p_1,\ldots,p_m) \ \forall (q_1,\ldots,q_m) \in \fG$$
and the associated norm $\|(p_1,\ldots,p_m)\| =  \sqrt{\sum_{i=1}^m \| p_i \|^2}$ for all $(p_1,\ldots,p_m) \in \fG$. We introduce also the nonzero linear continuous operator $\fL:\h \rightarrow \fG$, $\fL x = (L_1 x,\ldots,L_m x)$, its adjoint being $\fL^* : \fG \rightarrow \h$, $\fL^* \fv = \sum_{i=1}^m L_i^* v_i$.

We say that $(\bx, \bv_1,\ldots,\bv_m) \in \h \times \fG$ is a primal-dual solution to Problem \ref{opt-problem-inclusion-full}, if
\begin{equation}\label{operator-proof-conditions-full}
z - \sum_{i=1}^m L_i^*\bv_i \in A\bx + C\bx \ \mbox{and} \  \bv_i \in (B_i \Box D_i)(L_i\bx-r_i), \,i=1,\ldots,m.
\end{equation}
If $(\bx, \bv_1,\ldots,\bv_m) \in \h \times \fG$ is a primal-dual solution to Problem \ref{opt-problem-inclusion-full}, then $\bx$ is a solution to \eqref{opt-problem-primal-inclusion-full} and $(\bv_1,\ldots,\bv_m)$ is a solution to \eqref{opt-problem-dual-inclusion-full}. Notice also that
\begin{align*}
\bx \text{ solves }\eqref{opt-problem-primal-inclusion-full} & \Leftrightarrow z - \sum_{i=1}^m L_i^* (B_i\Box D_i)(L_i \bx-r_i)  \in A\bx + C\bx\Leftrightarrow\\
\exists\, \bv_1\in\g_1,\ldots,\bv_m\in\g_m \ & \mbox{such that} \ \left\{
		\begin{array}{l} z - \sum_{i=1}^m L_i^*\bv_i  \in A\bx + C\bx, \\ \bv_i \in  (B_i\Box D_i)(L_i \bx-r_i) , \ i=1,\ldots,m.  \end{array}\right.
\end{align*}
Thus, if $\bx$ is a solution to \eqref{opt-problem-primal-inclusion-full}, then there exists $(\bv_1,\ldots,\bv_m) \in \fG$ such that $(\bx, \bv_1,\ldots,\bv_m)$ is a primal-dual solution to Problem \ref{opt-problem-inclusion-full} and if $(\bv_1,\ldots,\bv_m)$ is a solution to \eqref{opt-problem-dual-inclusion-full}, then there exists $\bx \in \h$ such that $(\bx, \bv_1,\ldots,\bv_m)$ is a primal-dual solution to Problem \ref{opt-problem-inclusion-full}.

The next result provides the error-free variant of the primal-dual algorithm in \cite{ComPes12} and the corresponding convergence statements, as given in \cite[Theorem 3.1]{ComPes12}.
\begin{theorem}
	\label{theorem-inclusion-prelim}
	For Problem \ref{opt-problem-inclusion-full} suppose that
	$$ z \in \ran\left( A + \sum_{i=1}^m L_i^*\left( B_i \Box D_i \right)(L_i\cdot -r_i) + C \right).$$
	Let $x_0 \in \h$ and $(v_{1,0}, \ldots, v_{m,0}) \in \fG$, set
	$$ \beta = \max\{\mu,\nu_1\ldots,\nu_m\} + \sqrt{\sum_{i=1}^m \|L_i\|^2},$$
	choose $\varepsilon \in (0,\frac{1}{\beta+1})$ and $(\gamma_n)_{n\geq 0}$ a sequence in $\left[\varepsilon, \frac{1-\varepsilon}{\beta}\right]$ and set
	\begin{align}\label{A0}
	  \left(\forall n\geq 0\right) \   \left\lfloor \begin{array}{l}
		p_{1,n} = J_{\gamma_n A}\left(x_n - \gamma_n \left( Cx_n +\sum_{i=1}^mL_i^* v_{i,n} -z\right) \right) \\
		\text{For }i=1,\ldots,m  \\
				\ \left\lfloor \begin{array}{l}
					p_{2,i,n} = J_{\gamma_n B_i^{-1}}\left(v_{i,n} +\gamma_n (L_i x_n  -D_i^{-1}v_{i,n} -r_i)\right) \\
					v_{i,n+1} = \gamma_n L_i( p_{1,n} - x_n) + \gamma_n(D_i^{-1}v_{i,n} - D_i^{-1}p_{2,i,n})  + p_{2,i,n} \\
				\end{array} \right.\\
		x_{n+1} = \gamma_n \sum_{i=1}^m L_i^*(v_{i,n}-p_{2,i,n}) + \gamma_n(C x_n - C p_{1,n}) +p_{1,n} 	.
		\end{array}
		\right.
	\end{align}
	Then the following statements are true:	
	\begin{enumerate}[label={(\roman*)}]
	\setlength{\itemsep}{-2pt}
		\item  $\sum_{n\in\N}\|x_n-p_{1,n}\|^2 < +\infty$ and $\forall i \in \{1,\ldots,m\}$ $\sum_{n\in\N}\|v_{i,n}-p_{2,i,n}\|^2 < +\infty$.
		\item  There exists a primal-dual solution $(\bx,\bv_1,\ldots,\bv_m) \in \h \times \fG$ to Problem \ref{opt-problem-inclusion-full} such that the
following hold:
\begin{enumerate}[label={(\alph*)}]
			\setlength{\itemsep}{-2pt}
			\item $x_n \rightharpoonup \bx$ and $p_{1,n}\rightharpoonup \bx$.
			\item $(\forall i \in \{1,\ldots,m\})\ v_{i,n} \rightharpoonup \bv_i$ and $p_{2,i,n}\rightharpoonup \bv_i$.
\end{enumerate}
\end{enumerate}
\end{theorem}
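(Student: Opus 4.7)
The natural strategy, following \cite{ComPes12}, is a product-space reformulation: recast Problem~\ref{opt-problem-inclusion-full} as a monotone inclusion $\boldsymbol{0}\in\fA\fp+\fB\fp$ on the Hilbert space $\fH:=\h\times\fG$ endowed with the natural inner product, where $\fA$ is maximally monotone and $\fB$ is single-valued, monotone and $\beta$-Lipschitzian; one then invokes Tseng's forward-backward-forward splitting and identifies the resulting abstract recursion with \eqref{A0}.

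Concretely, I would define on $\fH$
\[
\fA(x,v_1,\ldots,v_m) := (-z+Ax)\times\prod_{i=1}^m\bigl(r_i+B_i^{-1}v_i\bigr),
\]
together with the single-valued operators
\[
\fS(x,v_1,\ldots,v_m) := \Bigl(\textstyle\sum_{i=1}^m L_i^*v_i,\,-L_1 x,\ldots,-L_m x\Bigr),\quad \fC(x,v_1,\ldots,v_m) := (Cx,D_1^{-1}v_1,\ldots,D_m^{-1}v_m),
\]
and set $\fB:=\fS+\fC$. Using the elementary equivalence $v_i\in(B_i\Box D_i)(L_i x-r_i)\Longleftrightarrow L_i x-r_i-D_i^{-1}v_i\in B_i^{-1}v_i$, one checks that $(\bx,\bv_1,\ldots,\bv_m)$ satisfies \eqref{operator-proof-conditions-full} if and only if $\boldsymbol{0}\in(\fA+\fB)(\bx,\bv_1,\ldots,\bv_m)$, and the solvability assumption guarantees that this zero set is nonempty. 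I would then record the three key structural properties: (i) $\fA$ is maximally monotone, as a Cartesian product of translates of maximally monotone operators; (ii) $\fS$ is linear, bounded and skew-adjoint, hence monotone with $\langle\fS\fx,\fx\rangle=0$ for every $\fx\in\fH$, and $\|\fS\|\le\sqrt{\sum_{i=1}^m\|L_i\|^2}$; (iii) $\fC$ is monotone and $\max\{\mu,\nu_1,\ldots,\nu_m\}$-Lipschitzian. Consequently $\fB$ is monotone and $\beta$-Lipschitzian with exactly the $\beta$ of the statement.

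Applying Tseng's forward-backward-forward iteration to $\boldsymbol{0}\in\fA\fp+\fB\fp$ with step sizes $\gamma_n\in[\varepsilon,(1-\varepsilon)/\beta]$ produces
\[
\fp_n = J_{\gamma_n\fA}\bigl(\fx_n-\gamma_n\fB\fx_n\bigr),\qquad \fx_{n+1}=\fp_n+\gamma_n\bigl(\fB\fx_n-\fB\fp_n\bigr).
\]
Decomposing componentwise and using the identities $J_{\gamma(-z+A)}(y)=J_{\gamma A}(y+\gamma z)$ and $J_{\gamma(r_i+B_i^{-1})}(y)=J_{\gamma B_i^{-1}}(y-\gamma r_i)$ rewrites this recursion as scheme~\eqref{A0}. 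Tseng's convergence theorem then simultaneously delivers both assertions: the square-summability of $\|\fx_n-\fp_n\|$ yields (i), while weak convergence of the sequences $(\fx_n)_{n\ge 0}$ and $(\fp_n)_{n\ge 0}$ to a common zero $(\bx,\bv_1,\ldots,\bv_m)$ of $\fA+\fB$ yields, via the equivalence above, (ii).

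The main obstacle is purely bookkeeping: one must carefully track how the constants $-z$ and $r_i$ absorbed into $\fA$ reappear as shifts $+\gamma_n z$ and $-\gamma_n r_i$ inside the resolvent arguments, and verify that the forward correction $\gamma_n(\fB\fx_n-\fB\fp_n)$ decomposes into the skew terms $\gamma_n L_i(p_{1,n}-x_n)$ and $\gamma_n\sum_{i=1}^m L_i^*(v_{i,n}-p_{2,i,n})$ together with the Lipschitz corrections $\gamma_n(Cx_n-Cp_{1,n})$ and $\gamma_n(D_i^{-1}v_{i,n}-D_i^{-1}p_{2,i,n})$, exactly as in~\eqref{A0}. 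Beyond this algebraic reduction, no further estimates are required: additivity of the Lipschitz constant across the monotone summands $\fS$ and $\fC$, Cartesian-product maximality, and the off-the-shelf Tseng theorem take care of everything else.
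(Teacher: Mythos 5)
The paper does not prove this theorem at all: it is quoted verbatim as \cite[Theorem 3.1]{ComPes12}, and your product-space reduction (absorbing $-z$ and $r_i$ into the maximally monotone block $\fA$, splitting $\fB$ into the skew part $\fS$ and the Lipschitz monotone part $\fC$, then invoking Tseng's forward-backward-forward theorem) is precisely the argument given in that reference. Your outline is correct, the constants $\|\fS\|\le\sqrt{\sum_{i=1}^m\|L_i\|^2}$ and $\max\{\mu,\nu_1,\ldots,\nu_m\}$ check out, and the identification of $\mathrm{zer}(\fA+\fB)$ with the primal-dual solution set via $v_i\in(B_i\Box D_i)(L_ix-r_i)\Leftrightarrow L_ix-r_i-D_i^{-1}v_i\in B_i^{-1}v_i$ is exactly the intended one.
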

In this paper we consider first Problem \ref{opt-problem-inclusion-full} in its particular formulation as a primal-dual pair of convex minimization problems, approach which relies on the fact that the subdifferential of a proper, convex and lower semicontinuous function is maximally monotone, and show that the convergence rate of the sequence of objective function values on the iterates generated by \eqref{A0} is of $\O(\frac{1}{n})$, where  $n \in \N$ is the number of passed iterations. Further, in Section \ref{sectionOperator},  we provide for the general monotone inclusion problem, as given in Problem \ref{opt-problem-inclusion-full}, two new acceleration schemes which generate under strong monotonicity assumptions sequences of primal and/or dual iterates converge with improved convergence properties. The feasibility of the proposed methods is explicitly shown in Section \ref{sectionApp} by means of numerical experiments in the context of solving image denoising, image deblurring and image inpainting problems via total variation regularization.

One of the iterative schemes to which we compare our algorithms is a primal-dual splitting method for solving highly structured monotone inclusions, as well, and it was provided by V\~u in \cite{Vu11}. Here, instead of monotone Lipschitzian operators, cocoercive operators were used and, consequently, instead of Tseng's splitting, the forward-backward splitting method has been used. The primal-dual method due to Chambolle and Pock described in \cite[Algorithm 1]{ChaPoc11} is a particular instance of V\~u's algorithm.

\section{Convex minimization problems}\label{sectionFval}
The aim of this section is to provide a rate of convergence for the sequence of the values of the objective function at the iterates generated by the algorithm
\eqref{A0} when solving a convex minimization problem and its conjugate dual. The primal-dual pair under investigation is described in the following.

\begin{problem}\label{opt-problem:general}
Consider the real Hilbert spaces $\h$ and $\g_i, i=1,...,m,$ $f\in \Gamma(\h)$ and $h : \h \rightarrow \R$ a convex and differentiable function with $\mu$-Lipschitzian gradient for some $\mu \in \R_{++}$. Furthermore, let $z \in \h$ and for every $i\in\{1,\ldots,m\}$, let $r_i \in \g_i$, $g_i, l_i \in \Gamma(\g_i)$ such that $l_i$ is $\nu_i^{-1}$-strongly convex for some $\nu_i \in \R_{++}$, and let $L_i : \h \rightarrow \g_i$ be a nonzero linear continuous operator. We consider the convex minimization problem
\begin{align}
	\label{opt-problem:general-primal}
	(P) \quad \inf_{x \in \h}{\left\{f(x)+\sum_{i=1}^m (g_i \Box l_i)(L_ix-r_i) +h(x) -\<x,z\>\right\}}
\end{align}
and its dual problem
\begin{align}
	\label{opt-problem:general-dual}
	(D) \quad \sup_{(v_i,\ldots,v_m) \in \g_1\times\ldots\times\g_m}{\left\{-\left( f^*\Box h^*\right)\left( z - \sum_{i=1}^m L_i^*v_i\right) - \sum_{i=1}^m \left( g_i^*(v_i) + l_i^*(v_i) + \< v_i,r_i \> \right) \right\} }.
\end{align}
\end{problem}
In order to investigate the primal-dual pair \eqref{opt-problem:general-primal}-\eqref{opt-problem:general-dual} in the context of Problem \ref{opt-problem:general}, one has to take
$$ A = \partial f, \ C=\nabla h, \text{ and, for }i=1,\ldots,m, \ B_i=\partial g_i \text{ and } D_i = \partial l_i.$$
Then $A$ and $B_i, i=1,...,m$ are maximal monotone, $C$ is monotone, by \cite[Proposition 17.10]{BauschkeCombettes11}, and $D_i^{-1} = \nabla l_i^*$ is monotone and $\nu_i$-Lipschitz continuous for $i=1,\ldots,m$, according to  \cite[Proposition 17.10, Theorem 18.15 and Corollary 16.24]{BauschkeCombettes11}. One can easily see that (see, for instance, \cite[Theorem 4.2]{ComPes12}) whenever $(\bx, \bv_1,\ldots,\bv_m) \in \h \times \fG$ is a primal-dual solution to Problem \ref{opt-problem-inclusion-full}, with the above choice of the involved operators, $\bx$ is an optimal solution to $(P)$, $(\bv_1,\ldots,\bv_m)$ is an optimal solution to $(D)$ and for $(P)$-$(D)$ strong duality holds, thus the optimal objective values of the two problems coincide.

The primal-dual pair in Problem \ref{opt-problem:general} captures various different types of optimization problems. One such particular instance is formulated as follows and we refer for more examples to \cite{ComPes12}.
\begin{example}
In Problem \ref{opt-problem:general} take $z=0$, let $l_i:\g_i \rightarrow \oR$, $l_i=\delta_{\{0\}}$ and $r_i=0$ for $i=1,\ldots,m$, and set $h:\h \rightarrow \R$, $h(x)=0$ for all $x\in \h$.  Then \eqref{opt-problem:general-primal} reduces to
$$ (P) \quad \inf_{x \in \h}{\left\{f(x)+\sum_{i=1}^m g_i(L_ix) \right\}}, $$
while the dual problem \eqref{opt-problem:general-dual} becomes
$$ (D) \quad \sup_{(v_i,\ldots,v_m) \in \g_1\times\ldots\times\g_m}{\left\{-f^*\left(- \sum_{i=1}^m L_i^*v_i\right) - \sum_{i=1}^m g_i^*(v_i)  \right\}}.$$
\end{example}

In order to simplify the upcoming formulations and calculations we introduce the following more compact notations. With respect to Problem \ref{opt-problem:general}, let $\ff:\h \rightarrow \oR,\ \ff(x) = f(x) + h(x) -\<x,z\>$. Then $\dom \ff = \dom f$ and its conjugate $\ff^* :\h \rightarrow \oR$ is given by $\ff^*(p) = (f + h)^*(z+p) = (f^* \Box h^*)(z+p)$, since $\dom h =\h$. Further, we set
\begin{align*}
	\fv = (v_1,\ldots,v_m), \quad	\fbv = (\bv_1,\ldots,\bv_m), \quad \fp_{2,n} = (p_{2,1,n},\ldots,p_{2,m,n}), \quad	\fr = (r_1,\ldots,r_m).
\end{align*}
We define the function $\fg:\fG \rightarrow \oR$, $\fg(\fy) = \sum_{i=1}^m (g_i\Box l_i)(y_i)$ and observe that its conjugate $\fg^* :\fG \rightarrow \oR$ is given by $\fg^*(\fv) = \sum_{i=1}^m (g_i\Box l_i)^*(v_i)=\sum_{i=1}^m(g_i^* + l_i^*)(v_i)$. Notice that, as $l_i^*, i=1,\ldots,m$, has full domain (cf. \cite[Theorem 18.15]{BauschkeCombettes11}), we get
\begin{align}
		\dom \fg^* &= (\dom g_1^* \cap \dom l_1^*) \times \ldots \times (\dom g_m^* \cap \dom l_m^*) = \dom g_1^* \times \ldots \times \dom g_m^*,
\end{align}

The primal and the dual optimization problems given in Problem \ref{opt-problem:general} can be equivalently represented as
\begin{align*}
	(P) \quad \inf_{x \in \h}{\left\{\ff(x)+\fg(\fL x -\fr)\right\}},
\end{align*}
and, respectively,
\begin{align*}
	(D) \quad \sup_{\fv \in \fG}{\left\{-\ff^*(-\fL^* \fv)-\fg^*(\fv) - \<\fv,\fr \> \right\}}.
\end{align*}
Then $\bx \in \h$ solves $(P)$, $\fbv \in \fG$ solves $(D)$ and for $(P)$-$(D)$ strong duality holds if and only if (cf. \cite{Bot10,BotGradWanka09})
\begin{align}
		\label{opt-problem-optimality-condition}
		-\fL^*\fbv &\in \partial \ff(\bx) \ \mbox{and} \	\fL \bx - \fr \in \partial \fg^*(\fbv).
\end{align}
Let us mention also that for $\bx \in \h$ and $\fbv \in \fG$ fulfilling \eqref{opt-problem-optimality-condition} it holds
$$  \left[\< \fL x-\fr, \fbv \> + \ff(x) -\fg^*(\fbv) \right]- \left[ \< \fL \bx-\fr, \fv \> + \ff(\bx) -\fg^*(\fv) \right] \geq 0 \ \forall x\in \h \ \forall \fv \in \fG.$$

For given sets $B_1\subseteq\h$ and $B_2 \subseteq \fG$ we introduce the so-called \textit{primal-dual gap function}
\begin{align}
		\label{primal-dual-gap}
		\mathcal{G}_{B_1 \times B_2}(x,\fv) &= \sup_{\tilde{\fv}\in B_2}\left\{ \< \fL x-\fr, \tilde{\fv} \> + \ff(x) -\fg^*(\tilde{\fv}) \right\} \notag \\
		& \quad- \inf_{\tilde{x} \in B_1}{\left\{ \< \fL \tilde{x}-\fr, \fv \> + \ff(\tilde{x}) -\fg^*(\fv) \right\}}.
\end{align}
We consider the following algorithm for solving $(P)$-$(D)$, which differs from the one given in Theorem \ref{theorem-inclusion-prelim} by the fact that we are asking the sequence $(\gamma_n)_{n\geq 0} \subseteq \R_{++}$ to be nondecreasing.
\begin{algorithm}
	\label{alg1}
	Let $x_0 \in \h$ and $(v_{1,0}, \ldots, v_{m,0}) \in \fG$, set
	$$ \beta = \max\{ \mu, \nu_1,\ldots, \nu_m \} + \sqrt{\sum_{i=1}^n\| L_i \|^2},$$
choose $\varepsilon \in \left(0, \frac{1}{\beta +1} \right)$ and $(\gamma_n)_{n\geq 0}$ a nondecreasing sequence in $\left[ \varepsilon, \frac{1-\varepsilon}{\beta}\right]$ and set
	\begin{align}\label{A1}
	  \left(\forall n\geq 0\right) \   \left\lfloor \begin{array}{l}
		p_{1,n} = \Prox_{\gamma_n f}\left(x_n - \gamma_n \left( \nabla h(x_n) +\sum_{i=1}^mL_i^*v_{i,n} -z\right) \right) \\
		\text{For }i=1,\ldots,m  \\
				\ \left\lfloor \begin{array}{l}
					p_{2,i,n} = \Prox_{\gamma_n g_i^*}\left(v_{i,n} +\gamma_n (L_i x_n - \nabla l_i^*(v_{i,n}) -r_i)\right) \\
					v_{i,n+1} = \gamma_n L_i( p_{1,n} - x_n) +\gamma_n(\nabla l_i^*(v_{i,n})-\nabla l_i^*(p_{2,i,n})) + p_{2,i,n}
				\end{array} \right.\\
		x_{n+1} = \gamma_n \sum_{i=1}^mL_i^*(v_{i,n}-p_{2,i,n}) + \gamma_n(\nabla h(x_n) - \nabla h(p_{1,n})) +p_{1,n}.		
		\end{array}
		\right.
	\end{align}
\end{algorithm}

\begin{theorem}
	\label{theorem-fvalues}
	For Problem \ref{opt-problem:general} suppose that
	$$ z \in \ran\left( \partial f + \sum_{i=1}^m L_i^*\left( \partial g_i \Box \partial l_i \right)(L_i\cdot -r_i) + \nabla h \right).$$
	Then there exists an optimal solution $\bx \in \h$ to $(P)$ and an optimal solution $(\bv_1,\ldots,\bv_m) \in \fG$ to $(D)$, such that the following holds for the sequences generated by Algorithm \ref{alg1}:
	\begin{enumerate}[label={(\alph*)}]
		\setlength{\itemsep}{-2pt}
        \item $z-\sum_{i=1}^mL_i^*\bv_i \in \partial f(\bx) + \nabla h(\bx)$ and $L_i \bx - r_i \in \partial g_i^*(\bv_i) + \nabla l_i^*(\bv_i)$ $\forall i\in\{1,\ldots,m\}$.
		\item $x_n \rightharpoonup \bx$, $p_{1,n} \rightharpoonup \bx$ and  $v_{i,n} \rightharpoonup \bv_i$, $p_{2,i,n} \rightharpoonup \bv_i$ $\forall i\in\{1,\ldots,m\}$.
		\item For $n\geq 0$ it holds
		$$ \frac{\| x_n - \bx \|^2}{2\gamma_n} + \sum_{i=1}^m\frac{\| v_{i,n} - \bv_i \|^2}{2\gamma_n} \leq  \frac{\| x_0 - \bx \|^2}{2\gamma_0} + \sum_{i=1}^m\frac{\| v_{i,0} - \bv_i \|^2}{2\gamma_0}. $$
		\item If $B_1\subseteq\h$ and $B_2\subseteq \fG$ are bounded, then for $x^N:=\frac{1}{N}\sum_{n=0}^{N-1} p_{1,n}$ and $v_i^N:=\frac{1}{N}\sum_{n=0}^{N-1} p_{2,i,n}$, $i=1,\ldots,m$, the primal-dual gap has the upper bound
		\begin{align} \label{fval-global-gap} \mathcal{G}_{B_1 \times B_2}(x^N,v_1^N,\ldots,v_m^N) \leq \frac{C(B_1,B_2)}{N}, \end{align}
		where
		$$ C(B_1,B_2) = \sup_{(x,v_1,\ldots,v_m) \in B_1 \times B_2}\left\{ \frac{\| x_0 - x \|^2}{2\gamma_0} + \sum_{i=1}^m\frac{\| v_{i,0} - v_i \|^2}{2\gamma_0} \right\}. $$
		\item The sequence $(x^N,v_1^N,\ldots,v_m^N)$ converges weakly to $(\bx,\bv_1,\ldots,\bv_m)$.
	\end{enumerate}
\end{theorem}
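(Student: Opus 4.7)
My plan is to dispatch (a), (b) and (c) by a direct reduction to Theorem \ref{theorem-inclusion-prelim} and to invest the real effort into the one-step ergodic estimate behind (d); (e) will then follow immediately from (b).

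For (a) and (b), set $A := \partial f$, $C := \nabla h$, $B_i := \partial g_i$, $D_i := \partial l_i$. The maximal-monotonicity and Lipschitz hypotheses of Problem \ref{opt-problem-inclusion-full} are satisfied thanks to \cite[Proposition 17.10, Theorem 18.15, Corollary 16.24]{BauschkeCombettes11}, so Theorem \ref{theorem-inclusion-prelim} applies: (b) is its conclusion (ii) transcribed, while (a) is the rewriting of \eqref{operator-proof-conditions-full} via $(\partial g_i \Box \partial l_i)^{-1} = \partial g_i^* + \nabla l_i^*$. For (c) I would pick up the Fej\'er-type estimate underlying the proof of Theorem \ref{theorem-inclusion-prelim},
\[
\|x_{n+1}-\bx\|^2 + \|\fv_{n+1}-\fbv\|^2 \leq \|x_n-\bx\|^2 + \|\fv_n-\fbv\|^2 - (1-\gamma_n^2\beta^2)\bigl(\|x_n-p_{1,n}\|^2+\|\fv_n-\fp_{2,n}\|^2\bigr),
\]
which, combined with $\gamma_n\beta \leq 1-\varepsilon < 1$ and the new hypothesis $\gamma_n \geq \gamma_0$, yields the claim.

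The core work is (d). My target is a one-step inequality
\[
\langle \fL p_{1,n}-\fr,\fv\rangle + \ff(p_{1,n}) - \fg^*(\fv) - \bigl[\langle \fL x -\fr,\fp_{2,n}\rangle + \ff(x) - \fg^*(\fp_{2,n})\bigr] \leq \frac{1}{2\gamma_n}\bigl[\Phi_n(x,\fv) - \Phi_{n+1}(x,\fv)\bigr]
\]
valid for every $(x,\fv)\in\h\times\fG$, where $\Phi_n(x,\fv) := \|x_n-x\|^2 + \|\fv_n-\fv\|^2$. The ingredients are (i) the subdifferential characterizations of $p_{1,n}$ and $p_{2,i,n}$ as proximal points,
\[
\tfrac{1}{\gamma_n}(x_n - p_{1,n}) - \nabla h(x_n) - \fL^*\fv_n + z \in \partial f(p_{1,n}),
\]
\[
\tfrac{1}{\gamma_n}(v_{i,n}-p_{2,i,n}) + L_i x_n - \nabla l_i^*(v_{i,n}) - r_i \in \partial g_i^*(p_{2,i,n}),
\]
(ii) the subgradient inequalities for $f$ and $g_i^*$, (iii) the descent lemmata for $h$ (constant $\mu$) and $l_i^*$ (constant $\nu_i$), and (iv) the Tseng correction identities $x_{n+1}-p_{1,n} = \gamma_n\fL^*(\fv_n-\fp_{2,n}) + \gamma_n(\nabla h(x_n)-\nabla h(p_{1,n}))$ and $v_{i,n+1}-p_{2,i,n} = \gamma_n L_i(p_{1,n}-x_n) + \gamma_n(\nabla l_i^*(v_{i,n})-\nabla l_i^*(p_{2,i,n}))$. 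The algebra is orchestrated so that (A) the bilinear $\langle \fL\,\cdot\,,\,\cdot\,\rangle$ cross terms recombine with the inner products against $x_n-p_{1,n}$ and $\fv_n-\fp_{2,n}$ into the telescoping piece $\tfrac{1}{2\gamma_n}(\Phi_n-\Phi_{n+1})$, and (B) the residual squared-norm errors coming from the descent lemmata and from Young's inequality on the $(\nabla h,\nabla l_i^*)$ mismatch are absorbed using $\gamma_n\beta\leq 1-\varepsilon$.

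Once the single-step bound is in place, an Abel summation on $\sum_{n=0}^{N-1}\tfrac{1}{\gamma_n}(\Phi_n(x,\fv)-\Phi_{n+1}(x,\fv))$ combined with the nondecreasing property of $(\gamma_n)$ makes all boundary correction terms nonpositive and leaves the clean upper bound $\Phi_0(x,\fv)/\gamma_0$. Dividing by $2N$ and invoking Jensen's inequality for the convex $\ff$ and $\fg^*$ on the averages $x^N$ and $\fv^N$ pushes the averaging inside the gap expression; taking the supremum over $(x,\fv)\in B_1\times B_2$ then produces (d). Statement (e) follows immediately from (b), since Ces\`aro averages of weakly convergent sequences converge weakly to the same limit. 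The single place where I anticipate real friction is step (A) of the one-step derivation: the exact bookkeeping by which the cross terms produced by the two subdifferentials, the forward evaluations of $\nabla h$ and $\nabla l_i^*$, and the Tseng corrections built into the updates reassemble into a clean squared-norm telescope. This orchestration is what distinguishes the rate analysis here from the pure convergence proof of Theorem \ref{theorem-inclusion-prelim}, and it is also where the monotonicity of $(\gamma_n)$ enters crucially via the subsequent Abel summation.
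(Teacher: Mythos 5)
Your proposal is correct and follows essentially the same route as the paper: parts (a), (b) via the reduction $A=\partial f$, $C=\nabla h$, $B_i=\partial g_i$, $D_i=\partial l_i$ and the Combettes--Pesquet convergence theorem, and part (d) via exactly the paper's one-step inequality (prox characterizations, subgradient inequalities for $f$, $g_i^*$, $h$, $l_i^*$, the three-point identities, absorption of the Tseng correction terms using $\gamma_n\beta\leq 1-\varepsilon$, then summation using the monotonicity of $(\gamma_n)_{n\geq 0}$ and Jensen). The only noteworthy deviation is part (c), which you import from the Fej\'er property of the underlying Tseng iteration rather than reading it off the summed gap inequality at $(\bx,\fbv)$ as the paper does; both arguments are valid and use the nondecrease of $(\gamma_n)_{n\geq 0}$ in the same way.
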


\begin{proof}
Theorem 4.2 in \cite{ComPes12} guarantees the existence of an optimal solution $\bx \in \h$ to \eqref{opt-problem:general-primal} and of an optimal solution $(\bv_1,\ldots,\bv_m) \in \fG$ to \eqref{opt-problem:general-dual} such that strong duality holds, $x_n \rightharpoonup \bx$, $p_{1,n} \rightharpoonup \bx$, as well as $v_{i,n} \rightharpoonup \bv_i$ and $p_{2,i,n} \rightharpoonup \bv_i$ for $i=1,\ldots,m$, when $n$ converges to $+\infty$. Hence \textit{(a)} and \textit{(b)} are true. Thus, the solutions $\bx$ and $\fbv=(\bv_1,\ldots,\bv_m)$ fulfill \eqref{opt-problem-optimality-condition}.

Regarding the sequences $(p_{1,n})_{n \geq 0}$ and $(p_{2,i,n})_{n \geq 0}$, $i=1,\ldots,m$, generated in Algorithm \ref{alg1} we have for every $n \geq 0$
\begin{align*}
	p_{1,n} = \left( \id + \gamma_n \partial f\right)^{-1}&\left(x_n - \gamma_n \left( \nabla h(x_n) +L^*v_n -z\right) \right)  \\
	&\Leftrightarrow  \frac{x_n-p_{1,n}}{\gamma_n} -\nabla h(x_n) - L^*v_n +z \in \partial f(p_{1,n})
\end{align*}
and, for $i=1,...,m$,
\begin{align*}
	p_{2,i,n} = \left(\id + \gamma_n \partial g_i^*\right)^{-1}&\left(v_{i,n} +\gamma_n (L_i x_n - \nabla l_i^*(v_{i,n}) -r_i)\right)  \\
	&\Leftrightarrow  \frac{v_{i,n}-p_{2,i,n}}{\gamma_n} + L_i x_n - \nabla l_i^*(v_{i,n}) -r_i \in \partial g_i^*(p_{2,i,n}).
\end{align*}
In other words, it holds for every $n \geq 0$
\begin{align}
	\label{sub-ineq-f}
	f(x) &\geq f(p_{1,n}) + \< \frac{x_n-p_{1,n}}{\gamma_n} -\nabla h(x_n) - L^*v_n +z, x-p_{1,n} \> \, \forall x\in \h
\end{align}	
and, for $i=1,\ldots,m$,
\begin{align}\label{sub-ineq-g_i^*}
	g_i^*(v_i) &\geq g_i^*(p_{2,i,n}) + \< \frac{v_{i,n}-p_{2,i,n}}{\gamma_n} + L_i x_n - \nabla l_i^*(v_{i,n}) -r_i, v_i - p_{2,i,n} \> \, \forall v_i \in \g_i.
\end{align}
In addition to that, using that $h$ and $l_i^*, i=1,...,m$, are convex and differentiable, it holds for every $n \geq 0$
\begin{align}
	\label{sub-ineq-h}
	h(x) &\geq h(p_{1,n}) + \< \nabla h(p_{1,n}), x-p_{1,n} \> \, \forall x\in \h
\end{align}
and, for $i=1,\ldots,m$,
\begin{align}
	\label{sub-ineq-l_i^*}
	l_i^*(v_i) &\geq l_i^*(p_{2,i,n}) + \< \nabla l_i^*(p_{2,i,n}), v_i - p_{2,i,n} \> \, \forall v_i \in \g_i.
\end{align}
Consider arbitrary $x \in \h$ and $\fv=(\fv_1,\ldots,\fv_m) \in \fG$. Since
\begin{align*}
	\< \frac{x_n-p_{1,n}}{\gamma_n}, x-p_{1,n} \> &= \frac{\| x_n-p_{1,n}\|^2}{2\gamma_n} + \frac{\| x-p_{1,n}\|^2}{2\gamma_n} -\frac{\| x_n-x\|^2}{2\gamma_n} \\
	\< \frac{v_{i,n}-p_{2,i,n}}{\gamma_n} , v_i - p_{2,i,n} \> &= \frac{\| v_{i,n}-p_{2,i,n}\|^2}{2\gamma_n} + \frac{\| v_i-p_{2,i,n}\|^2}{2\gamma_n} -\frac{\| v_{i,n}-v_i\|^2}{2\gamma_n}, i=1,...,m,
\end{align*}
we obtain for every $n \geq 0$, by using the more compact notation of the elements in $\fG$ and by summing up the inequalities \eqref{sub-ineq-f}--\eqref{sub-ineq-l_i^*},
\begin{align*}
	\frac{\| x_n-x\|^2}{2\gamma_n} + \frac{\| \fv_{n}- \fv\|^2}{2\gamma_n} \geq
	\frac{\| x_n-p_{1,n}\|^2}{2\gamma_n}  + \frac{\| x-p_{1,n}\|^2}{2\gamma_n} + \frac{\| \fv_{n}-\fp_{2,n}\|^2}{2\gamma_n}+ \frac{\| \fv-\fp_{2,n}\|^2}{2\gamma_n} \\
	+ \sum_{i=1}^m \< L_i x_n +\nabla l_i^*(p_{2,i,n}) - \nabla l_i^*(v_{i,n}) -r_i, v_i - p_{2,i,n} \> - \sum_{i=1}^m (g_i^*+l_i^*)(v_i) +(f+h)(p_{1,n})  \\
	+ \< \nabla h(p_{1,n})-\nabla h(x_n) - L^*v_n +z, x-p_{1,n} \> - \left[ \sum_{i=1}^m-(g_i^* +l_i^*)(p_{2,i,n}) + (f+h)(x) \right].
\end{align*}
Further, using again the update rules in Algorithm \ref{alg1} and the equations
\begin{align*}
	\< \frac{p_{1,n}-x_{n+1}}{\gamma_n}, x-p_{1,n} \> &= \frac{\| x_{n+1}-x\|^2}{2\gamma_n} - \frac{\| x_{n+1} -p_{1,n}\|^2}{2\gamma_n} -\frac{\| x-p_{1,n}\|^2}{2\gamma_n}
\end{align*}
and, for $i=1,...,m$,
\begin{align*}
	\< \frac{p_{2,i,n}-v_{i,n+1}}{\gamma_n} , v_i - p_{2,i,n} \> &= \frac{\| v_{i,n+1}-v_i\|^2}{2\gamma_n} - \frac{\| v_{i,n+1}-p_{2,i,n}\|^2}{2\gamma_n} -\frac{\| v_i-p_{2,i,n}\|^2}{2\gamma_n},
\end{align*}
we obtain for every $n \geq 0$
\begin{align}
	\label{ineq-fval1}
	\frac{\| x_n-x\|^2}{2\gamma_n} &+ \frac{\| \fv_{n}- \fv\|^2}{2\gamma_n} 	\geq	\frac{\|x_{n+1}-x \|^2}{2\gamma_n}  + \frac{\| \fv_{n+1}-\fv\|^2}{2\gamma_n}+\frac{\| x_n-p_{1,n}\|^2}{2\gamma_n} +  \frac{\| \fv_{n}-\fp_{2,n}\|^2}{2\gamma_n}    \notag \\
	& -\frac{\| x_{n+1}-p_{1,n} \|^2}{2\gamma_n} - \frac{\| \fv_{n+1} -\fp_{2,n} \|^2}{2\gamma_n} + \left[ \< \fL p_{1,n} -\fr,\fv \> - \fg^*(\fv) + \ff (p_{1,n})\right] \notag \\
	&- \left[ \< \fL x -\fr,\fp_{2,n}\> - \fg^*(\fp_{2,n}) + \ff (x)\right].
\end{align}
Further, we equip the Hilbert space $\fH=\h \times \fG$ with the inner product
\begin{equation}\label{inprodfH}
\langle (y,\fp),(z,\fq)\rangle = \langle y,z\rangle + \langle \fp,\fq\rangle \ \forall (y,\fp), (z,\fq) \in \h \times \fG
\end{equation}
and the associated norm $\|(y,\fp)\| = \sqrt{\|y\|^2 + \|\fp\|^2}$ for every $(y,\fp) \in \h \times \fG$.
For every $n \geq 0$ it holds
\begin{align*}
	\frac{\| x_{n+1}-p_{1,n} \|^2}{2\gamma_n}  + \frac{\| \fv_{n+1} -\fp_{2,n} \|^2}{2\gamma_n}
	= \frac{\| (x_{n+1},\fv_{n+1})-(p_{1,n},\fp_{2,n}) \|^2}{2\gamma_n}
\end{align*}
and, consequently, by making use of the Lipschitz continuity of $\nabla h$ and $\nabla l_i^*$, $i=1,\ldots,m$, it shows that
{\allowdisplaybreaks
\begin{align}
	&\| (x_{n+1},\fv_{n+1})-(p_{1,n},\fp_{2,n}) \| \notag\\
	&= \gamma_n \| ( \fL^*(\fv_n-\fp_{2,n}) , L_1( p_{1,n} - x_n), \ldots, L_m(p_{1,n}-x_n)  ) \notag\\
	& \qquad \quad + (\nabla h(x_n) - \nabla h(p_{1,n}), \nabla l_1^*(v_{1,n})-\nabla l_1^*(p_{2,1,n}), \ldots, \nabla l_m^*(v_{m,n})-\nabla l_1^*(p_{2,m,n}))\| \notag\\
	&\leq \gamma_n \| ( \fL^*(\fv_n-\fp_{2,n}) , L_1( p_{1,n} - x_n), \ldots, L_m(p_{1,n}-x_n)  ) \| \notag\\
	& \quad + \gamma_n \|(\nabla h(x_n) - \nabla h(p_{1,n}), \nabla l_1^*(v_{1,n})-\nabla l_1^*(p_{2,1,n}), \ldots, \nabla l_m^*(v_{m,n})-\nabla l_1^*(p_{2,m,n}))\| \notag\\
	&= \gamma_n \sqrt{ \left\| \sum_{i=1}^m L_i^*(v_{i,n}-p_{2,i,n}) \right\|^2 + \sum_{i=1}^m \left\| L_i( p_{1,n} - x_n) \right\|^2} \notag\\
	& \quad + \gamma_n \sqrt{ \| \nabla h(x_n) - \nabla h(p_{1,n}) \|^2 + \sum_{i=1}^m \| \nabla l_i^*(v_{i,n})-\nabla l_i^*(p_{2,i,n}) \|^2} \notag\\
	&\leq \gamma_n \sqrt{  \left( \sum_{i=1}^m \| L_i\|^2 \right) \sum_{i=1}^m \left\| v_{i,n}-p_{2,i,n} \right\|^2 + \left( \sum_{i=1}^m \| L_i \|^2 \right) \left\|  p_{1,n} - x_n \right\|^2} \notag\\
	& \quad + \gamma_n \sqrt{ \mu^2 \| x_n - p_{1,n} \|^2 + \sum_{i=1}^m \nu_i^2 \| v_{i,n}-p_{2,i,n} \|^2} \notag\\
		\label{ineq-lipschitz-continuity}
	&\leq \gamma_n \left( \sqrt{\sum_{i=1}^m \| L_i \|^2} + \max\{ \mu, \nu_1,\ldots, \nu_m \} \right) \| (x_n,\fv_n) - (p_{1,n},\fp_{2,n}) \|.
\end{align}}

Hence, by taking into consideration the way in which $(\gamma_n)_{n\geq 0}$ is chosen, we have for every $n \geq 0$
\begin{align*}
	&\frac{1}{2\gamma_n} \left[ \| x_n-p_{1,n}\|^2 +  \| \fv_{n}-\fp_{2,n}\|^2 -\| x_{n+1}-p_{1,n} \|^2 - \| \fv_{n+1} -\fp_{2,n} \|^2 \right] \\
	&\geq \frac{1}{2\gamma_n} \left(1-\gamma_n^2 \left( \sqrt{\sum_{i=1}^m \| L_i \|^2} + \max\{ \mu, \nu_1,\ldots, \nu_m \} \right)^2\right) \| (x,\fv_n) - (p_{1,n},\fp_{2,n}) \|^2 \geq 0.
\end{align*}
and, consequently, \eqref{ineq-fval1} reduces to
\begin{align*}
	\frac{\| x_n-x\|^2}{2\gamma_n} + \frac{\| \fv_{n}- \fv\|^2}{2\gamma_n} &\geq	\frac{\gamma_{n+1}}{\gamma_n}\frac{\|x_{n+1}-x \|^2}{2\gamma_{n+1}} + \left[ \< \fL p_{1,n} -\fr,\fv \> - \fg^*(\fv) + \ff (p_{1,n})\right]  \\ &\quad + \frac{\gamma_{n+1}}{\gamma_n}\frac{\| \fv_{n+1}-\fv\|^2}{2\gamma_{n+1}} - \left[ \< \fL x -\fr,\fp_{2,n}\> - \fg^*(\fp_{2,n}) + \ff (x)\right].
\end{align*}
Let $N \geq 1$ be an arbitrary natural number. Summing the above inequality up from $n=0$ to $N-1$ and using the fact that $(\gamma_n)_{n\geq 0}$ is nondecreasing, it follows that
\begin{align}
	\label{ineq-sum-fval}
	\frac{\| x_0-x\|^2}{2\gamma_0} + \frac{\| \fv_{0}- \fv\|^2}{2\gamma_0}
	&\geq	\frac{\|x_{N}-x \|^2}{2\gamma_{N}} + \sum_{n=0}^{N-1}\left[ \< \fL p_{1,n} -\fr,\fv \> - \fg^*(\fv) + \ff (p_{1,n})\right] \notag\\
	&\quad + \frac{\| \fv_{N}-\fv\|^2}{2\gamma_{N}}- \sum_{n=0}^{N-1}\left[ \< \fL x -\fr,\fp_{2,n}\> - \fg^*(\fp_{2,n}) + \ff (x)\right].
\end{align}

Replacing $x=\bx$ and $\fv = \fbv$ in the above estimate, since they fulfill \eqref{opt-problem-optimality-condition}, we obtain
$$\sum_{n=0}^{N-1}\left[ \< \fL p_{1,n} -\fr,\boldsymbol{\bv} \> - \fg^*(\boldsymbol{\bv}) + \ff (p_{1,n})\right] - \sum_{n=0}^{N-1}\left[ \< \fL \bx -\fr,\fp_{2,n}\> - \fg^*(\fp_{2,n}) + \ff (\bx)\right] \geq 0.$$
Consequently,
$$ \frac{\| x_0-\bx\|^2}{2\gamma_0} + \frac{\| \fv_{0}- \boldsymbol{\bv}\|^2}{2\gamma_0} \geq \frac{\|x_{N}-\bx \|^2}{2\gamma_{N}}+\frac{\| \fv_{N}-\boldsymbol{\bv}\|^2}{2\gamma_{N}}$$
and statement \textit{(c)} follows.
On the other hand, dividing \eqref{ineq-sum-fval} by $N$, using the convexity of $\ff$ and $\fg^*$, and denoting $x^N:=\frac{1}{N}\sum_{n=0}^{N-1} p_{1,n}$ and $v_i^N:=\frac{1}{N}\sum_{n=0}^{N-1} p_{2,i,n}$, $i=1,\ldots,m$, we obtain
\begin{align*}
	\frac{1}{N}\left(\frac{\| x_0-x\|^2}{2\gamma_0} + \frac{\| \fv_{0}- \fv\|^2}{2\gamma_0} \right)
	&\geq	\left[ \< \fL x^N -\fr,\fv \> - \fg^*(\fv) + \ff (x^N)\right] \\ &\quad - \left[ \< \fL x -\fr,\fv^N\> - \fg^*(\fv^N) + \ff (x)\right], \notag
\end{align*}
which shows \eqref{fval-global-gap} when passing to the supremum over $x\in B_1$ and $\fv \in B_2$. In this way statement \textit{(d)} is verified. The weak convergence of $(x^N,\fv^N)$ to $(\bx,\boldsymbol{\bv})$ when $N$ converges to $+\infty$ is an easy consequence of the Stolz--Ces{\`a}ro Theorem, fact which shows \textit{(e)}.
\end{proof}

\begin{remark}\label{remark1}
In the situation when the functions $g_i$ are Lipschitz continuous on $\g_i, i=1,...,m,$ inequality \eqref{fval-global-gap} provides for the sequence of the values of the objective of $(P)$ taken at $(x^N)_{N \geq 1}$ a convergence rate of $\O(\frac{1}{N})$, namely, it holds
\begin{align}
		\label{ineq-primal-convergence}
		\ff (x^N) + \fg(\fL x^N -\fr) - \ff(\bx) -\fg(\fL \bx-\fr) \leq \frac{C(B_1,B_2)}{N} \ \forall N \geq 1.
\end{align}
Indeed, due to statement \textit{(b)} of the previous theorem, the sequence $(p_{1,n})_{n\geq 0} \subseteq \h$ is bounded and one can take $B_1 \subset \h$ being a bounded, convex and closed set containing this sequence. Obviously, $\bar x \in B_1$. On the other hand, we take $B_2=\dom g_1^* \times \ldots \times \dom g_m^*$, which is in this situation a bounded set. Then it holds, using the Fenchel-Moreau Theorem and the Young-Fenchel inequality, that
\begin{align*}
		\mathcal{G}_{B_1 \times B_2}(x^N,\fv^N) &= \ff(x^N) + \fg(\fL x^N -\fr) + \fg^*(\fv^N) - \inf_{\tilde{x} \in B_1}{\left\{ \< \fL \tilde{x}-\fr, \fv^N \> + \ff(\tilde{x}) \right\}} \\
		&\geq \ff(x^N) + \fg(\fL x^N -\fr) + \fg^*(\fv^N)  -\< \fL \bx-\fr, \fv^N \> - \ff(\bx) \\
		&\geq \ff(x^N) + \fg(\fL x^N -\fr) - \ff(\bx) -\fg(\fL \bx-\fr).
\end{align*}
Hence, \eqref{ineq-primal-convergence} follows by statement \textit{(d)} in Theorem \ref{theorem-fvalues}.

In a similar way, one can show that, whenever $f$ is Lipschitz continuous, \eqref{fval-global-gap} provides for the sequence of the values of the objective of $(D)$ taken at $(v^N)_{N \geq 1}$ a convergence rate of $\O(\frac{1}{N})$.
\end{remark}

\begin{remark}\label{remark11}
If $\g_i$, $i=1,\ldots,m$, are finite-dimensional real Hilbert spaces, then \eqref{ineq-primal-convergence} is true, even under the weaker assumption that the convex functions $g_i, i=1,...,m$, have full domain, without necessarily being Lipschitz continuous. The set $B_1 \subset \h$ can be chosen as in Remark \ref{remark1}, but this time we take $B_2 = \bigtimes _{i=1}^m \bigcup_{n\geq 0}\partial g_i\left( L_i p_{1,n}\right) \subset \fG$, by noticing also that the functions $g_i, i=1,...,m$, are everywhere subdifferentiable.

The set $B_2$ is bounded, as for every $i=1,\ldots,m$ the set $\bigcup_{n\geq 0}\partial g_i\left( L_i p_{1,n}\right)$ is bounded. Let be $i\in \{1,...,m\}$ fixed. Indeed, as $p_{1,n} \rightharpoonup \bx$, it follows that $L_i p_{1,n} \rightarrow L_i\bar x$ for $i=1,...,m$. Using the fact that the subdifferential of $g_i$ is a locally bounded operator at $L_i \bar x$, the boundedness of $\bigcup_{n\geq 0}\partial g_i\left( L_i p_{1,n}\right)$ follows automatically.

For this choice of the sets $B_1$ and $B_2$, by using the same arguments as in the previous remark, it follows that \eqref{ineq-primal-convergence} is true.
\end{remark}

\section{Zeros of sums of monotone operators}\label{sectionOperator}
In this section we turn our attention to the primal-dual monotone inclusion problems formulated in Problem \ref{opt-problem-inclusion-full} with the aim to provide accelerations of the iterative method described in Theorem \ref{theorem-inclusion-prelim} under the additional strong monotonicity assumptions.

\subsection{The case when \texorpdfstring{$A+C$}{A+C} is strongly monotone}

We focus first on the case when $A+C$ is $\rho$-strongly monotone for some $\rho \in \R_{++}$ and investigate the impact of this assumption on the convergence rate of the sequence of primal iterates. The condition $A+C$ is $\rho$-strongly monotone is fulfilled when either $A:\h \rightarrow 2^{\h}$ or $C:\h \rightarrow \h$ is $\rho$-strongly monotone. In case that $A$ is $\rho_1$-monotone and $C$ is $\rho_2$-monotone, the sum $A+C$ is $\rho$-monotone with $\rho=\rho_1+\rho_2$.

\begin{remark}
\label{rm1}
The situation when $B_i^{-1} + D_i^{-1}$ is $\tau_i$-strongly monotone with $\tau_i \in \R_{++}$ for $i=1,\ldots,m$, which improves the convergence rate of the sequence of dual iterates, can be handled with appropriate modifications.
\end{remark}

Due to technical reasons we assume in the following that the operators $D_i^{-1}$ in Problem \ref{opt-problem-inclusion-full} are zero for $i=1,\ldots,m$, thus, $D_i(0) = \g_i$ and $D_i(x) = \emptyset$ for $x \neq 0$, for $i=1,...,m$. In Remark \ref{rm2} we show how the results given in this particular context can be employed when treating the primal-dual pair of monotone inclusions \eqref{opt-problem-primal-inclusion-full}-\eqref{opt-problem-dual-inclusion-full}. Consequently, the problem we deal with in this subsection is as follows.

\begin{problem}\label{opt-problem-inclusion}
Consider the real Hilbert spaces $\h$ and $\g_i, i=1,...,m,$ $A:\h \rightarrow 2^{\h}$ a maximally monotone operator and $C:\h \rightarrow \h$ a monotone and $\mu$-Lipschitzian operator for some $\mu \in \R_{++}$. Furthermore, let $z \in \h$ and for every $i\in\{1,\ldots,m\}$, let $r_i \in \g_i$, let $B_i : \g_i \rightarrow 2^{\g_i}$ be maximally monotone operators and let $L_i : \h \rightarrow \g_i$ be a nonzero linear continuous operator. The problem is to solve the primal inclusion
\begin{align}
	\label{opt-problem-primal-inclusion}
	\text{find }\bx \in \h \text{ such that } z \in A\bx + \sum_{i=1}^m L_i^* B_i(L_i \bx-r_i) + C\bx,
\end{align}
together with the dual inclusion
\begin{align}
	\label{opt-problem-dual-inclusion}
	\text{find }\bv_1 \in \g_1,\ldots,\bv_m \in \g_m \text{ such that }(\exists x\in\h)\left\{
	\begin{array}{l}
		z - \sum_{i=1}^m L_i^*\bv_i \in Ax + Cx \\
		\bv_i \in B_i(L_ix-r_i), \,i=1,\ldots,m.
	\end{array}
\right.
\end{align}
\end{problem}
The subsequent algorithm represents an accelerated version of the one given in Theorem \ref{theorem-inclusion-prelim} and relies on the fruitful idea of using
a second sequence of variable step length parameters $(\sigma_n)_{n \geq 0} \subseteq \R_{++}$, which, together with the sequence of parameters $(\gamma_n)_{n \geq 0} \subseteq \R_{++}$, play an important role in the convergence analysis.
\begin{algorithm}
	\label{alg2}
	Let $x_0 \in \h$, $(v_{1,0}, \ldots, v_{m,0}) \in \fG$,
	$$	\gamma_0 \in  \left(0,\min\left\{1, \frac{\sqrt{1+4\rho}}{2(1+2\rho)\mu}\right\}\right)  \text{ and set }
	  \sigma_0 =  \frac{1}{2\gamma_0(1+2\rho)\sum_{i=1}^m \|L_i \|^2}.$$
	Consider the following updates:
	\begin{align}\label{A2}
	  \left(\forall n\geq 0\right) \   \left\lfloor \begin{array}{l}
		p_{1,n} = J_{\gamma_n A}\left(x_n - \gamma_n \left( Cx_n +\sum_{i=1}^mL_i^* v_{i,n} -z\right) \right) \\
		\text{For }i=1,\ldots,m  \\
				\ \left\lfloor \begin{array}{l}
					p_{2,i,n} = J_{\sigma_n B_i^{-1}}\left(v_{i,n} +\sigma_n (L_i x_n  -r_i)\right) \\
					v_{i,n+1} = \sigma_n L_i( p_{1,n} - x_n)  + p_{2,i,n} \\
				\end{array} \right.\\
		x_{n+1} = \gamma_n \sum_{i=1}^m L_i^*(v_{i,n}-p_{2,i,n}) + \gamma_n(C x_n - C p_{1,n}) +p_{1,n} \\
		\theta_n=1/\sqrt{1+2\rho\gamma_n(1-\gamma_n)}, \ \gamma_{n+1} = \theta_n\gamma_n, \ \sigma_{n+1} = \sigma_n/\theta_n.		
		\end{array}
		\right.
	\end{align}
\end{algorithm}
\begin{theorem}\label{theorem-inclusion}
In Problem \ref{opt-problem-inclusion} suppose that $A+C$ is $\rho$-strongly monotone with $\rho \in \R_{++}$ and let $(\bx,\bv_1,\ldots,\bv_m) \in \h\times\fG$ be a primal-dual solution to Problem \ref{opt-problem-inclusion}. Then for every $n\geq 0$ it holds
 \begin{align}
	\label{ineq-th2}
	\| x_{n} - \bx \|^2 + \gamma_n \sum_{i=1}^m \frac{\| v_{i,n} - \bv_i \|^2}{\sigma_{n}} \leq \gamma_n^2 \left( \frac{\| x_0 - \bx \|^2}{\gamma_0^2} + \sum_{i=1}^m\frac{\| v_{i,0} - \bv_i \|^2}{\gamma_0\sigma_0}\right),
\end{align}
where $\gamma_n,\,\sigma_n \in \R_{++}$, $x_n \in \h$ and $(v_{1,n},\ldots,v_{m,n}) \in \fG$ are the iterates generated by Algorithm \ref{alg2}.
\end{theorem}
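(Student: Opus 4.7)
The plan is to derive a one-step contraction $E_{n+1}\le E_n$ for the Lyapunov function
\[
E_n\ :=\ \frac{\|x_n-\bx\|^2}{\gamma_n^2}\ +\ \sum_{i=1}^m\frac{\|v_{i,n}-\bv_i\|^2}{\gamma_n\sigma_n}.
\]
Since the update rules enforce $\gamma_{n+1}\sigma_{n+1}=\gamma_n\sigma_n$ while $\gamma_{n+1}^2/\gamma_n^2=\theta_n^2=1/(1+2\rho\gamma_n(1-\gamma_n))$, an immediate induction from $E_n\le E_0$ yields \eqref{ineq-th2} after multiplying by $\gamma_n^2$. Everything reduces to establishing the one-step estimate at a fixed $n\ge 0$.

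I would extract the resolvent inclusions $\tfrac{x_n-p_{1,n}}{\gamma_n}-Cx_n-\sum_iL_i^*v_{i,n}+z\in Ap_{1,n}$ and $\tfrac{v_{i,n}-p_{2,i,n}}{\sigma_n}+L_ix_n-r_i\in B_i^{-1}p_{2,i,n}$, then pair them with the primal-dual conditions $z-\sum_iL_i^*\bv_i-C\bx\in A\bx$ and $L_i\bx-r_i\in B_i^{-1}\bv_i$. The $\rho$-strong monotonicity of $A+C$ and monotonicity of $B_i^{-1}$ give $m+1$ variational inequalities; using the algorithmic identities $\tfrac{x_n-p_{1,n}}{\gamma_n}-(Cx_n-Cp_{1,n})+\sum_iL_i^*(\bv_i-v_{i,n})=\tfrac{x_n-x_{n+1}}{\gamma_n}+\sum_iL_i^*(\bv_i-p_{2,i,n})$ and $\tfrac{v_{i,n}-p_{2,i,n}}{\sigma_n}+L_i(x_n-\bx)=\tfrac{v_{i,n}-v_{i,n+1}}{\sigma_n}+L_i(p_{1,n}-\bx)$ and summing, the skew-symmetric bilinear terms cancel exactly and the polar identity turns the remaining inner products into squared norms, producing
\begin{align*}
\tfrac{\|x_n-\bx\|^2}{2\gamma_n}+\sum_i\tfrac{\|v_{i,n}-\bv_i\|^2}{2\sigma_n}
\ \ge\ & \tfrac{\|x_{n+1}-\bx\|^2}{2\gamma_n}+\sum_i\tfrac{\|v_{i,n+1}-\bv_i\|^2}{2\sigma_n}+\rho\|p_{1,n}-\bx\|^2+R_n^{(0)},
\end{align*}
with $R_n^{(0)}:=\tfrac{\|p_{1,n}-x_n\|^2-\|p_{1,n}-x_{n+1}\|^2}{2\gamma_n}+\sum_i\tfrac{\|p_{2,i,n}-v_{i,n}\|^2-\|p_{2,i,n}-v_{i,n+1}\|^2}{2\sigma_n}$. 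To activate the acceleration I would then apply $\|p_{1,n}-\bx\|^2\ge(1-\gamma_n)\|x_{n+1}-\bx\|^2-\tfrac{1-\gamma_n}{\gamma_n}\|x_{n+1}-p_{1,n}\|^2$, an instance of $\|a+b\|^2\le(1+\alpha)\|a\|^2+(1+\alpha^{-1})\|b\|^2$ with $\alpha=\gamma_n/(1-\gamma_n)>0$; the coefficient $1-\gamma_n$ is precisely what converts $\rho\|p_{1,n}-\bx\|^2$ into the target factor $1+2\rho\gamma_n(1-\gamma_n)=\theta_n^{-2}$ on $\|x_{n+1}-\bx\|^2/(2\gamma_n)$.

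The main obstacle is checking that the pooled residual $R_n^{(0)}-\tfrac{\rho(1-\gamma_n)}{\gamma_n}\|x_{n+1}-p_{1,n}\|^2$ is nonnegative. For this I control the negative pieces via the update formulas: $\|x_{n+1}-p_{1,n}\|^2\le 2\gamma_n^2\mu^2\|p_{1,n}-x_n\|^2+2\gamma_n^2 S\sum_i\|v_{i,n}-p_{2,i,n}\|^2$ (combining Lipschitzness of $C$ with $\|\sum_iL_i^*a_i\|^2\le S\sum_i\|a_i\|^2$, where $S:=\sum_i\|L_i\|^2$) and $\sum_i\|p_{2,i,n}-v_{i,n+1}\|^2=\sigma_n^2 S\|p_{1,n}-x_n\|^2$. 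Nonnegativity of the residual then boils down to the two scalar conditions $2\gamma_n\sigma_n S(1+2\rho(1-\gamma_n))\le 1$ and $2\gamma_n^2\mu^2(1+2\rho(1-\gamma_n))+\gamma_n\sigma_n S\le 1$. The step-size invariant $\gamma_n\sigma_n S=\gamma_0\sigma_0 S=\tfrac{1}{2(1+2\rho)}$, preserved by the rule $\sigma_{n+1}=\sigma_n/\theta_n$, turns the first into $\tfrac{2\rho\gamma_n}{1+2\rho}\ge 0$, and the bound $\gamma_n\le\gamma_0\le\sqrt{1+4\rho}/[2(1+2\rho)\mu]$ reduces the second to $2\rho\gamma_n\ge 0$, both trivially true. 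The chain of estimates then collapses to $\tfrac{\|x_n-\bx\|^2}{2\gamma_n}+\sum_i\tfrac{\|v_{i,n}-\bv_i\|^2}{2\sigma_n}\ge\tfrac{\theta_n^{-2}\|x_{n+1}-\bx\|^2}{2\gamma_n}+\sum_i\tfrac{\|v_{i,n+1}-\bv_i\|^2}{2\sigma_n}$, which after multiplication by $2/\gamma_n$ and invoking $\gamma_{n+1}^2=\theta_n^2\gamma_n^2$ and $\gamma_n\sigma_n=\gamma_{n+1}\sigma_{n+1}$ is exactly $E_{n+1}\le E_n$, completing the induction.
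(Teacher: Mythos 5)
Your proposal is correct and follows essentially the same route as the paper's proof: the same resolvent inclusions paired with the strong monotonicity of $A+C$ and monotonicity of $B_i^{-1}$, the same cancellation of the bilinear coupling terms, the same Young-type inequality with parameter $\gamma_n/(1-\gamma_n)$ producing the factor $1+2\rho\gamma_n(1-\gamma_n)=\theta_n^{-2}$, the same residual control via $\gamma_n\sigma_n\sum_i\|L_i\|^2=\tfrac{1}{2(1+2\rho)}$ and $\gamma_n\le\gamma_0$, and the same telescoping of $E_n$ (the paper's \eqref{ineq-m3}--\eqref{ineq-m5} are exactly your one-step contraction divided by $\gamma_n$). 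The only cosmetic differences are that you keep $1+2\rho(1-\gamma_n)$ where the paper relaxes to $1+2\rho$, and your displayed identity for $\sum_i\|p_{2,i,n}-v_{i,n+1}\|^2$ should be an inequality $\le\sigma_n^2 S\|p_{1,n}-x_n\|^2$ after applying the operator-norm bound, which is the direction you need anyway.
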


\begin{proof}
Taking into account the definitions of the resolvents occurring in Algorithm \ref{alg2} we obtain
\begin{align*}
	\text{ and }\ \left.
	\begin{aligned} 	
		\frac{x_n-p_{1,n}}{\gamma_n} -Cx_n - \sum_{i=1}^m L_i^*v_{i,n} +z &\in A p_{1,n} \\
		\frac{v_{i,n}-p_{2,i,n}}{\sigma_{n}} + L_i x_n -r_i &\in B_i^{-1} p_{2,i,n}, i=1,\ldots,m,
	\end{aligned}
	\right.
\end{align*}
which, in the light of the updating rules in \eqref{A2}, furnishes for every $n \geq 0$
\begin{align}
	\label{inclusion-alg2-prelim}
	\text{ and } \ \left.
	\begin{aligned}
		\frac{x_n-x_{n+1}}{\gamma_n} - \sum_{i=1}^m L_i^*p_{2,i,n} +z &\in (A+C) p_{1,n} \\
		\frac{v_{i,n}-v_{i,n+1}}{\sigma_{n}} + L_i p_{1,n}  -r_i &\in B_i^{-1} p_{2,i,n}, i=1,\ldots,m.
	\end{aligned}
	\right.
\end{align}
The primal-dual solution $(\bx,\bv_1,\ldots,\bv_m) \in \h\times\fG$ to Problem \ref{opt-problem-inclusion} fulfills (see \eqref{operator-proof-conditions-full}, where $D_i^{-1}$ are taken to be zero for $i=1,...,m$)
\begin{equation*}
z - \sum_{i=1}^m L_i^*\bv_i \in A\bx + C\bx \ \mbox{and} \  \bv_i \in B_i(L_i\bx-r_i), \,i=1,\ldots,m.
\end{equation*}
Since the sum $A+C$ is $\rho$-strongly monotone, we have for every $n \geq 0$
\begin{align}
	\label{operator-monotone1}
	\< p_{1,n} - \bx, \frac{x_n-x_{n+1}}{\gamma_n} - \sum_{i=1}^m L_i^*p_{2,i,n} +z -\left( z - \sum_{i=1}^m L_i^*\bv_i \right) \> \geq \rho \| p_{1,n} - \bx \|^2
\end{align}
while, due to the monotonicity of $B_i^{-1}:\g_i \rightarrow 2^{\g_i}$, we obtain for every $n \geq 0$
\begin{align}
	\label{operator-monotone3}
	\< p_{2,i,n} - \bv_i , \frac{v_{i,n}-v_{i,n+1}}{\sigma_{n}} + L_i p_{1,n} -r_i - \left( L_i \bx  -r_i \right)\> \geq 0, \ i=1,\ldots,m.
\end{align}
Further, we set
$$ \fbv = (\bv_1,\ldots,\bv_m), \quad \fv_n = (v_{1,n},\ldots,v_{m,n}), \quad \fp_{2,n}=(p_{2,1,n},\ldots,p_{2,m,n}). $$
Summing up the inequalities \eqref{operator-monotone1} and \eqref{operator-monotone3}, it follows that
\begin{align}
	\label{ineq-sum-monotone}
	&\< p_{1,n} - \bx, \frac{x_n-x_{n+1}}{\gamma_n} \> + \< \fp_{2,n} - \fbv , \frac{\fv_{n}-\fv_{n+1}}{\sigma_{n}} \> + \< p_{1,n} - \bx, \fL^*(\fbv-\fp_{2,n})\> \notag\\ &+ \< \fp_{2,n} - \fbv, \fL(p_{1,n}-\bx) \>  \geq \rho \| p_{1,n} - \bx \|^2.
\end{align}
and, from here,
\begin{align}
	\label{ineq-monotone}
	\< p_{1,n} - \bx, \frac{x_n-x_{n+1}}{\gamma_n} \> + \< \fp_{2,n} - \fbv , \frac{\fv_{n}-\fv_{n+1}}{\sigma_{n}} \>  \geq \rho \| p_{1,n} - \bx \|^2 \ \forall n \geq 0.
\end{align}
In the light of the equations
\begin{align*}
	\< p_{1,n} - \bx, \frac{x_n - x_{n+1}}{\gamma_n} \>&= \< p_{1,n} - x_{n+1}, \frac{x_n - x_{n+1}}{\gamma_n} \> + \< x_{n+1} - \bx, \frac{x_n - x_{n+1}}{\gamma_n} \> \\
	&= \frac{\| x_{n+1} - p_{1,n} \|^2}{2\gamma_n} - \frac{\|x_n- p_{1,n}\|^2}{2\gamma_n} + \frac{\| x_n - \bx \|^2}{2\gamma_n} - \frac{\| x_{n+1} - \bx \|^2}{2\gamma_n},
\end{align*}
and
\begin{align*}
	\< \fp_{2,n} - \fbv, \frac{\fv_n - \fv_{n+1}}{\sigma_n} \>&= \< \fp_{2,n} - \fv_{n+1}, \frac{\fv_n - \fv_{n+1}}{\sigma_n} \> + \< \fv_{n+1} - \fbv, \frac{\fv_n - \fv_{n+1}}{\sigma_n} \> \\
	&= \frac{\| \fv_{n+1}-\fp_{2,n}   \|^2}{2\sigma_n} - \frac{\|\fv_n-\fp_{2,n}\|^2}{2\sigma_n} + \frac{\| \fv_n - \fbv \|^2}{2\sigma_n} - \frac{\| \fv_{n+1} - \fbv \|^2}{2\sigma_n}
\end{align*}
inequality \eqref{ineq-monotone} reads for every $n\geq 0$
\begin{align}
	\frac{\| x_n - \bx \|^2}{2\gamma_n} + \frac{\| \fv_n - \fbv \|^2}{2\sigma_n} &\geq \rho \| p_{1,n} - \bx \|^2 + \frac{\| x_{n+1} - \bx \|^2}{2\gamma_n} + \frac{\| \fv_{n+1} - \fbv \|^2}{2\sigma_n} + \frac{\|x_n- p_{1,n}\|^2}{2\gamma_n} \notag\\
	\label{ineq-m1}
	&\quad + \frac{\|\fv_n-\fp_{2,n}\|^2}{2\sigma_n} - \frac{\| x_{n+1} - p_{1,n} \|^2}{2\gamma_n} - \frac{\| \fv_{n+1}-\fp_{2,n}   \|^2}{2\sigma_n}.
\end{align}
Using that $2ab \leq \alpha a^2 + \frac{b^2}{\alpha}$ for all $a,b \in \R$, $\alpha \in \R_{++}$, we obtain for $\alpha:=\gamma_n$,
\begin{align*}
	\rho \| p_{1,n} - \bx \|^2 &\geq \rho \left( \| x_{n+1} - \bx \|^2 - 2 \| x_{n+1} - \bx \| \| x_{n+1} - p_{1,n} \| + \|x_{n+1} - p_{1,n} \|^2\right) \\
	&\geq \frac{2\rho\gamma_n(1-\gamma_n)}{2\gamma_n} \| x_{n+1} - \bx \|^2 - \frac{2\rho(1-\gamma_n)}{2\gamma_n} \|x_{n+1} - p_{1,n} \|^2,
\end{align*}
which, in combination with \eqref{ineq-m1}, yields for every $n\geq 0$
\begin{align}
	&\frac{\| x_n - \bx \|^2}{2\gamma_n} + \frac{\| \fv_n - \fbv \|^2}{2\sigma_n} \geq  \frac{(1+2\rho\gamma_n(1-\gamma_n))\| x_{n+1} - \bx \|^2}{2\gamma_n} + \frac{\| \fv_{n+1} - \fbv \|^2}{2\sigma_n} \notag\\
	\label{ineq-m2}
	& + \frac{\|x_n- p_{1,n}\|^2}{2\gamma_n} + \frac{\|\fv_n-\fp_{2,n}\|^2}{2\sigma_n} - \frac{(1+2\rho(1-\gamma_n))\| x_{n+1} - p_{1,n} \|^2}{2\gamma_n} - \frac{\| \fv_{n+1}-\fp_{2,n}   \|^2}{2\sigma_n}.
\end{align}
Investigating the last two terms in the right-hand side of the above estimate it shows for every $n \geq 0$ that
\begin{align*}
&- \frac{(1+2\rho(1-\gamma_n))\| x_{n+1} - p_{1,n} \|^2}{2\gamma_n}  \\
&\geq-\frac{(1+2\rho)\gamma_n}{2}\left\| \sum_{i=1}^m L_i^*(v_{i,n}-p_{2,i,n}) + (C x_n - C p_{1,n}) \right\|^2 \\
&\geq -\frac{2(1+2\rho)\gamma_n}{2}\left( \left(\sum_{i=1}^m \|L_i \|^2\right) \|\fv_{n}-\fp_{2,n}\|^2 + \mu^2 \|x_n - p_{1,n}\|^2 \right),
\end{align*}
and
\begin{align*}
- \frac{\| \fv_{n+1}-\fp_{2,n} \|^2}{2\sigma_n}
=-\frac{\sigma_n}{2} \left( \sum_{i=1}^m \| L_i( p_{1,n} - x_n) \|^2 \right)
\geq -\frac{\sigma_n}{2} \left(\sum_{i=1}^m \|L_i\|^2\right) \| p_{1,n} - x_n\|^2 .
\end{align*}
Hence, for every $n \geq 0$ it holds
\begin{align*}
	&\frac{\|x_n- p_{1,n}\|^2}{2\gamma_n} + \frac{\|\fv_n-\fp_{2,n}\|^2}{2\sigma_n} - \frac{(1+2\rho(1-\gamma_n))\| x_{n+1} - p_{1,n} \|^2}{2\gamma_n} - \frac{\| \fv_{n+1}-\fp_{2,n}   \|^2}{2\sigma_n} \\
	&\geq \frac{\left(1 - \gamma_n\sigma_n \sum_{i=1}^m \|L_i\|^2 - 2(1+2\rho)\gamma_n^2 \mu^2 \right)}{2\gamma_n} \| p_{1,n} - x_n\|^2 \\
	&\quad + \frac{\left(1-2\gamma_n\sigma_n(1+2\rho)\sum_{i=1}^m \|L_i \|^2\right) }{2\sigma_n} \|\fv_{n}-\fp_{2,n}\|^2\\
&\geq 0.
\end{align*}
The nonnegativity of the expression in the above relation follows because of the sequence $(\gamma_n)_{n\geq 0}$ is nonincreasing, $\gamma_n \sigma_n = \gamma_0 \sigma_0$ for every $n\geq 0$ and
\begin{align*}
	\gamma_0 \in  \left(0,\min\left\{1, \frac{\sqrt{1+4\rho}}{2(1+2\rho)\mu}\right\}\right) \ \mbox{and} \ \sigma_0 =  \frac{1}{2\gamma_0(1+2\rho)\sum_{i=1}^m \|L_i \|^2}.
\end{align*}
Consequently, inequality \eqref{ineq-m2} becomes
\begin{align}
	\label{ineq-m3}
	\frac{\| x_n - \bx \|^2}{2\gamma_n} + \frac{\| \fv_n - \fbv \|^2}{2\sigma_n} \geq  \frac{(1+2\rho\gamma_n(1-\gamma_n))\| x_{n+1} - \bx \|^2}{2\gamma_{n}} + \frac{\| \fv_{n+1} - \fbv \|^2}{2\sigma_{n}} \ \forall n \geq 0.
\end{align}
Notice that we have $\gamma_{n+1} < \gamma_n$, $\sigma_{n+1}>\sigma_n$ and $\gamma_{n+1}\sigma_{n+1}=\gamma_n\sigma_n$ for every $n \geq 0$. Dividing \eqref{ineq-m3} by $\gamma_n$ and making use of
\begin{align*}
	\theta_n = \frac{1}{\sqrt{1+2\rho\gamma_n(1-\gamma_n)}}, \quad \gamma_{n+1} = \theta_n \gamma_n, \quad \sigma_{n+1}=\frac{\sigma_n}{\theta_n},
\end{align*}
 we obtain
\begin{align*}
	\frac{\| x_n - \bx \|^2}{2\gamma_n^2} + \frac{\| \fv_n - \fbv \|^2}{2\gamma_n\sigma_n} \geq  \frac{\| x_{n+1} - \bx \|^2}{2\gamma_{n+1}^2} + \frac{\| \fv_{n+1} - \fbv \|^2}{2\gamma_{n+1}\sigma_{n+1}} \ \forall n \geq 0.
\end{align*}
Let be $N \geq 1$. Summing this inequalities from $n=0$ to $N-1$, we finally get
\begin{align}
	\label{ineq-m4}
	\frac{\| x_0 - \bx \|^2}{2\gamma_0^2} + \frac{\| \fv_0 - \fbv \|^2}{2\gamma_0\sigma_0} \geq  \frac{\| x_{N} - \bx \|^2}{2\gamma_{N}^2} + \frac{\| \fv_{N} - \fbv \|^2}{2\gamma_{N}\sigma_{N}}.
\end{align}
In conclusion,
\begin{align}
	\label{ineq-m5}
	\frac{\| x_{n} - \bx \|^2}{2} + \gamma_n\frac{\| \fv_{n} - \fbv \|^2}{2\sigma_{n}} \leq \gamma_n^2 \left( \frac{\| x_0 - \bx \|^2}{2\gamma_0^2} + \frac{\| \fv_0 - \fbv \|^2}{2\gamma_0\sigma_0}\right) \ \forall n \geq 0,
\end{align}
which completes the proof.
\end{proof}
Next we show that $\rho \gamma_n$ converges like $\frac{1}{n}$ as $n \rightarrow +\infty$.
\begin{proposition}
	Let $\gamma_0 \in (0,1)$ and consider the sequence $(\gamma_n)_{n\geq 0} \subseteq \R_{++}$, where
	\begin{align}
		\label{prop-eq1}
		\gamma_{n+1} = \frac{\gamma_n}{\sqrt{1+2\rho\gamma_n(1-\gamma_n)}} \ \forall n \geq 0.
	\end{align}
	Then $\lim_{n \rightarrow +\infty} n\rho\gamma_n=1$.
\end{proposition}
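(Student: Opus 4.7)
The plan is to transform the recursion into one for the reciprocal $u_n := 1/\gamma_n$ which is easier to handle, establish that $u_n \to +\infty$, and then apply the Stolz--Ces\`aro theorem to conclude $u_n/n \to \rho$, i.e.\ $n\rho\gamma_n \to 1$.

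First, I would verify by induction that $\gamma_n \in (0,1)$ for every $n \geq 0$: since $\gamma_0 \in (0,1)$, one has $2\rho\gamma_n(1-\gamma_n) > 0$, so the recursion gives $\gamma_{n+1} < \gamma_n < 1$, and positivity is preserved since the denominator is real and positive. Hence $(\gamma_n)_{n\geq 0}$ is strictly decreasing and bounded below by $0$, so it converges to some limit $\gamma_\infty \in [0,\gamma_0)$. Passing to the limit in \eqref{prop-eq1} gives $\sqrt{1 + 2\rho\gamma_\infty(1-\gamma_\infty)} = 1$ (whenever $\gamma_\infty > 0$), which forces $\gamma_\infty(1-\gamma_\infty) = 0$; combined with $\gamma_\infty < 1$, this yields $\gamma_\infty = 0$. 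Consequently $u_n = 1/\gamma_n \to +\infty$.

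Next, squaring and taking reciprocals in \eqref{prop-eq1} produces the clean identity
\begin{align*}
u_{n+1}^2 \;=\; \frac{1 + 2\rho\gamma_n(1-\gamma_n)}{\gamma_n^2} \;=\; u_n^2 \;+\; 2\rho u_n \;-\; 2\rho \quad \forall n \geq 0.
\end{align*}
In particular, $(u_{n+1}/u_n)^2 = 1 + 2\rho/u_n - 2\rho/u_n^2 \to 1$, so $u_{n+1}/u_n \to 1$. Factoring the difference of squares yields
\begin{align*}
u_{n+1} - u_n \;=\; \frac{u_{n+1}^2 - u_n^2}{u_{n+1} + u_n} \;=\; \frac{2\rho(u_n - 1)}{u_{n+1} + u_n},
\end{align*}
and since $(u_{n+1} + u_n)/u_n \to 2$ and $(u_n-1)/u_n \to 1$, we obtain $u_{n+1} - u_n \to \rho$.

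Finally, the Stolz--Ces\`aro theorem, applied with the strictly increasing and divergent sequence $b_n = n$, gives
\begin{align*}
\lim_{n \to +\infty} \frac{u_n}{n} \;=\; \lim_{n \to +\infty} (u_{n+1} - u_n) \;=\; \rho,
\end{align*}
which is exactly $\lim_{n\to+\infty} n\rho\gamma_n = 1$. The only genuinely non-routine step is the reduction to a recursion that is amenable to Stolz--Ces\`aro; once the identity $u_{n+1}^2 = u_n^2 + 2\rho u_n - 2\rho$ is in hand, the rest is a standard discrete asymptotic calculation with no surprises.
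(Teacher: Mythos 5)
Your proof is correct and follows essentially the same route as the paper: both establish $\gamma_n \to 0$ by passing to the limit in the recursion and then apply Stolz--Ces\`aro to the reciprocals, the only difference being that you package the difference-of-squares manipulation as the clean identity $u_{n+1}^2 = u_n^2 + 2\rho u_n - 2\rho$ while the paper carries out the equivalent algebra directly on $\gamma_n^2 - \gamma_{n+1}^2$ inside the Stolz--Ces\`aro quotient.
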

\begin{proof}
Since the sequence $(\gamma_n)_{n\geq 0} \subseteq \left(0,1\right)$ is bounded and decreasing, it converges towards some $l \in \left[0,1\right)$ as $n \rightarrow +\infty$. We let $n \rightarrow + \infty$ in \eqref{prop-eq1} and obtain
\begin{align*}
		l^2 (1+2\rho l (1-l)) = l^2 \Leftrightarrow 2\rho l^3 (1-l) = 0,
\end{align*}
which shows that $l=0$, i.\,e. $\gamma_n \rightarrow 0 \ (n \rightarrow +\infty)$. On the other hand, \eqref{prop-eq1} implies that $\frac{\gamma_n}{\gamma_{n+1}} \rightarrow 1 (n \rightarrow +\infty)$.
As $(\frac{1}{\gamma_n})_{n\geq 0}$ is a strictly increasing and unbounded sequence, by applying the Stolz--Ces{\`a}ro Theorem it shows that
\begin{align*}
\lim_{n \rightarrow + \infty} n\gamma_n
	&=\lim_{n \rightarrow + \infty} \frac{n}{\frac{1}{\gamma_n}}
	= \lim_{n \rightarrow + \infty} \frac{ n+1- n}{\frac{1}{\gamma_{n+1}} - \frac{1}{\gamma_n}}
	= \lim_{n \rightarrow + \infty} \frac{ \gamma_n \gamma_{n+1}}{\gamma_n - \gamma_{n+1}}\\
	&= \lim_{n \rightarrow + \infty} \frac{ \gamma_n \gamma_{n+1} (\gamma_n + \gamma_{n+1})}{\gamma_n^2 - \gamma_{n+1}^2}
	\overset{\mathclap{\eqref{prop-eq1}}}{=} \lim_{n \rightarrow + \infty} \frac{ \gamma_n \gamma_{n+1} (\gamma_n + \gamma_{n+1})}{2\rho \gamma_{n+1}^2 \gamma_n (1-\gamma_n)}\\
	&= \lim_{n \rightarrow + \infty} \frac{ \gamma_n + \gamma_{n+1}}{2\rho \gamma_{n+1}(1-\gamma_n)}
	= \lim_{n \rightarrow + \infty} \frac{ \frac{\gamma_n}{\gamma_{n+1}} + 1}{2\rho (1-\gamma_n)}
	= \frac{2}{2\rho} = \frac{1}{\rho},
\end{align*}
which completes the proof.
\end{proof}
Hence, we have shown the following result.
\begin{theorem}\label{theorem-inclusion2}
In Problem \ref{opt-problem-inclusion} suppose that $A+C$ is $\rho$-strongly monotone and let $(\bx,\bv_1,\ldots,\bv_m) \in \h\times\fG$ be a primal-dual solution to Problem \ref{opt-problem-inclusion}. Then, for any $\varepsilon > 0$, there exists some $n_0\in\N$ (depending on $\varepsilon$ and $\rho\gamma_0$) such that for any $n\geq n_0$
 \begin{align}
	\label{ineq-th2-1}
	\| x_{n} - \bx \|^2 \leq \frac{1+\varepsilon}{n^2} \left( \frac{\| x_0 - \bx \|^2}{\rho^2\gamma_0^2} + \sum_{i=1}^m\frac{\| v_{i,0} - \bv_i \|^2}{\rho^2\gamma_0\sigma_0}\right),
\end{align}
where $\gamma_n,\,\sigma_n \in \R_{++}$, $x_n \in \h$ and $(v_{1,n},\ldots,v_{m,n}) \in \fG$ are the iterates generated by Algorithm \ref{alg2}.
\end{theorem}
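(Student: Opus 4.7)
The statement is essentially a corollary obtained by combining Theorem~\ref{theorem-inclusion} (the primal iterate satisfies $\|x_n-\bx\|^2 \leq \gamma_n^2 \cdot K_0$, where $K_0 := \|x_0-\bx\|^2/\gamma_0^2 + \sum_{i=1}^m \|v_{i,0}-\bv_i\|^2/(\gamma_0\sigma_0)$) with the previous proposition ($n\rho\gamma_n \to 1$). So my plan is short and direct.

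First, I would drop the nonnegative dual term $\gamma_n\sum_{i=1}^m \|v_{i,n}-\bv_i\|^2/\sigma_n$ in \eqref{ineq-th2} of Theorem~\ref{theorem-inclusion} to obtain the cleaner estimate
\begin{equation*}
\|x_n-\bx\|^2 \;\leq\; \gamma_n^2\,K_0 \qquad \forall n\geq 0.
\end{equation*}
Note that Theorem~\ref{theorem-inclusion} applies because the step-size $\gamma_0$ prescribed in Algorithm~\ref{alg2} lies in $(0,1)$, so the preceding proposition is applicable to the generated sequence $(\gamma_n)_{n\geq 0}$ and yields $n\rho\gamma_n \to 1$ as $n\to+\infty$.

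Next, given $\varepsilon>0$, I pick $\delta>0$ small enough so that $(1+\delta)^2 \leq 1+\varepsilon$. By the limit $n\rho\gamma_n\to 1$, there exists $n_0\in\N$ (depending on $\varepsilon$ through $\delta$, and implicitly on $\rho\gamma_0$ via the rate of convergence of this limit) such that for all $n\geq n_0$,
\begin{equation*}
\rho\gamma_n \;\leq\; \frac{1+\delta}{n}, \qquad \text{hence} \qquad \gamma_n^2 \;\leq\; \frac{(1+\delta)^2}{n^2\rho^2} \;\leq\; \frac{1+\varepsilon}{n^2\rho^2}.
\end{equation*}
Plugging this into the estimate from the first paragraph gives exactly
\begin{equation*}
\|x_n-\bx\|^2 \;\leq\; \frac{1+\varepsilon}{n^2\rho^2}\,K_0 \;=\; \frac{1+\varepsilon}{n^2}\left(\frac{\|x_0-\bx\|^2}{\rho^2\gamma_0^2} + \sum_{i=1}^m \frac{\|v_{i,0}-\bv_i\|^2}{\rho^2\gamma_0\sigma_0}\right),
\end{equation*}
which is the claim.

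There is no real obstacle here — both ingredients have already been established. The only mildly delicate point is the dependence of $n_0$ on the initial data: since the rate at which $n\rho\gamma_n$ approaches $1$ depends on $\gamma_0$ (as can be traced through the Stolz--Ces\`aro argument), the threshold $n_0$ inherits dependence on $\rho\gamma_0$ in addition to $\varepsilon$, which is exactly what the statement records. No further computation is required.
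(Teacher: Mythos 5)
Your proposal is correct and is exactly the argument the paper intends: Theorem \ref{theorem-inclusion2} is presented as an immediate consequence ("Hence, we have shown the following result") of combining inequality \eqref{ineq-th2} from Theorem \ref{theorem-inclusion} (after discarding the nonnegative dual term) with the Proposition establishing $n\rho\gamma_n \to 1$, noting that $\gamma_0 \in (0,1)$ by the step-size rule of Algorithm \ref{alg2}. Your choice of $\delta$ with $(1+\delta)^2 \leq 1+\varepsilon$ fills in the one small quantitative step the paper leaves implicit.
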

\begin{remark}\label{rm2}
In Algorithm \ref{alg2} and Theorem \ref{theorem-inclusion2} we assumed that $D_i^{-1} = 0$ for $i=1,\ldots,m$, however, similar statements can be also provided for Problem \ref{opt-problem-inclusion-full} under the additional assumption that the operators $D_i : \g_i \rightarrow 2^{\g_i}$ are such that $D_i^{-1}$ is $\nu_i^{-1}$-cocoercive with $\nu_i \in \R_{++}$ for $i=1,\ldots,m$. This assumption is in general stronger than assuming that $D_i$ is monotone and $D_i^{-1}$ is $\nu_i$-Lipschitzian for $i=1,...,m$. However, it guarantees that $D_i$ is $\nu_i^{-1}$-strongly monotone  and maximally monotone for $i=1,...,m$ (see \cite[Example 20.28, Proposition 20.22 and Example 22.6]{BauschkeCombettes11}). We introduce the Hilbert space $\fH = \h \times \fG$, the element $\fz=(z,0,\ldots,0) \in \fH$ and the maximally monotone operator $\fA : \fH \rightarrow 2^{\fH}$, $\fA (x,y_1,\ldots,y_m) = (Ax,D_1y_1,\ldots,D_m y_m)$ and the monotone and Lipschitzian operator $\fC : \fH \rightarrow \fH$, $\fC (x,y_1,\ldots,y_m) = (Cx,0,\ldots,0)$. Notice also that $\fA + \fC$ is strongly monotone. Furthermore, we introduce the element $\fr=(r_1,\ldots,r_m)\in\fG$, the maximally monotone operator $\fB: \fG \rightarrow 2^{\fG}$, $\fB(y_1,\ldots,y_m) = (B_1y_1,\ldots,B_my_m)$, and the linear continuous operator $\fL : \fH \rightarrow \fG$, $\fL (x,y_1\ldots,y_m) = (L_1 x-y_1,\ldots,L_m x - y_m),$ having as adjoint $\fL^* : \fG \rightarrow \fH$, $\fL^* (q_1,\ldots,q_m) = (\sum_{i=1}^m L_i^* q_i, -q_1,\ldots,-q_m)$. We consider the primal problem
	\begin{align}
	\label{primal-inc-pspace}
		\text{find } \fbx = (\bx,\bp_1\ldots\bp_m) \in \fH \text{ such that } \fz \in \fA \fbx + \fL^*\fB \left(\fL \fbx - \fr \right) + \fC \fbx,
	\end{align}
together with the dual inclusion problem
\begin{align}
	\label{dual-inc-pspace}
	\text{find } \fbv \in \fG \text{ such that }(\exists \fx \in\fH)\left\{
	\begin{array}{l}
	\fz - \fL^* \fbv \in \fA \fx + \fC \fx \\
		\fbv \in \fB(\fL \fx -\fr )
	\end{array}
\right..
\end{align}	
We notice that Algorithm \ref{alg2} can be employed for solving this primal-dual pair of monotone inclusion problems and, by separately involving the resolvents of $A, B_i$ and $D_i, i=1,...,m$, as for $\gamma \in \R_{++}$
\begin{align*}
		J_{\gamma \fA}(x,y_1,\ldots,y_m) &= (J_{\gamma A}x, J_{\gamma D_1} y_1, \ldots, J_{\gamma D_m} y_m) \ \forall (x,y_1,\ldots,y_m) \in \fH \\
		J_{\gamma \fB}(q_1,\ldots,q_m) &= (J_{\gamma B_1} q_1, \ldots, J_{\gamma B_m} q_m) \ \forall (q_1,\ldots,q_m) \in \fG.
\end{align*}
Having $(\fbx,\fbv) \in \fH \times \fG$ a primal-dual solution to the primal-dual pair of monotone inclusion problems \eqref{primal-inc-pspace}-\eqref{dual-inc-pspace}, Algorithm \ref{alg2} generates a sequence of primal iterates fulfilling \eqref{ineq-th2-1} in $\fH$. Moreover, $(\fbx,\fbv)$ is a a primal-dual solution to \eqref{primal-inc-pspace}-\eqref{dual-inc-pspace} if and only if
$$\fz - \fL^*\fbv \in \fA \fbx + \fC \fbx \ \mbox{and} \  \fbv \in \fB \left(\fL \fbx - \fr \right)$$
$$\Leftrightarrow z - \sum_{i=1}^m L_i^*\bv_i \in A \bx + C \bx \ \mbox{and} \  \bv_i \in D_i \bp_i, \bv_i \in B_i \left(L_i \bx - \bp_i - r_i \right), i=1,\ldots,m$$
$$\Leftrightarrow z - \sum_{i=1}^m L_i^*\bv_i \in A \bx + C \bx \ \mbox{and} \  \bv_i \in D_i \bp_i, L_i \bx - r_i \in B_i^{-1}\bv_i + \bp_i, i=1,\ldots,m.$$
Thus, if $(\fbx,\fbv)$ is a primal-dual solution to \eqref{primal-inc-pspace}-\eqref{dual-inc-pspace}, then $(\bx,\fbv)$ is a primal-dual solution to Problem
\ref{opt-problem-inclusion-full}. Viceversa, if $(\bx,\fbv)$ is a primal-dual solution to Problem \ref{opt-problem-inclusion-full}, then, choosing $\bp_i \in D_i^{-1}\bv_i, i=1,...,m$, and $\fbx = (\bx,\bp_1\ldots\bp_m)$, it yields that $(\fbx,\fbv)$ is a primal-dual solution to \eqref{primal-inc-pspace}-\eqref{dual-inc-pspace}. In conclusion, the first component of every primal iterate in $\fH$ generated by Algorithm \ref{alg2} for finding the primal-dual solution $(\fbx,\fbv)$ to \eqref{primal-inc-pspace}-\eqref{dual-inc-pspace} will furnish a sequence of iterates verifying \eqref{ineq-th2-1} in $\h$ for the primal-dual solution $(\bx,\fbv)$ to Problem
\ref{opt-problem-inclusion-full}.
\end{remark}

\subsection{The case when \texorpdfstring{$A+C$ and $B_i^{-1}+D_i^{-1}$, $i=1,\ldots,m,$}{A+C and Bi(inverse)+Di(inverse), i=1,...,m} are strongly monotone}
Within this subsection we consider the case when $A+C$ is $\rho$-strongly monotone with $\rho \in \R_{++}$ and $B_i^{-1}+D_i^{-1}$ is $\tau_i$-strongly monotone with $\tau_i \in \R_{++}$ for $i=1,\ldots,m,$ and provide an accelerated version of the algorithm in Theorem \ref{theorem-inclusion-prelim} which generates sequences of primal and dual iterates that converge to the primal-dual solution to Problem \ref{opt-problem-inclusion-full} with an improved rate of convergence.
\begin{algorithm}\label{alg3}
	Let $x_0 \in \h$, $(v_{1,0}, \ldots, v_{m,0}) \in \fG$, and $\gamma \in (0,1)$ such that
	$$ \gamma \leq  \frac{1}{\sqrt{1+2\min\left\{\rho,\tau_1,\ldots,\tau_m\right\}}\left(\sqrt{\sum_{i=1}^m \| L_i\|^2} + \max \left\{ \mu, \nu_1,\ldots,\nu_m \right\}\right)}. $$
	Consider the following updates:
	\begin{align}\label{A3}
	  \left(\forall n\geq 0\right) \   \left\lfloor \begin{array}{l}
		p_{1,n} = J_{\gamma A}\left(x_n - \gamma \left( C x_n +\sum_{i=1}^mL_i^*v_{i,n} -z\right) \right) \\
		\text{For }i=1,\ldots,m  \\
				\ \left\lfloor \begin{array}{l}
					p_{2,i,n} = J_{\gamma B_i^{-1}}\left(v_{i,n} +\gamma (L_i x_n - D_i^{-1} v_{i,n} -r_i)\right) \\
					v_{i,n+1} = \gamma L_i( p_{1,n} - x_n) +\gamma (D_i^{-1} v_{i,n} -D_i^{-1} p_{2,i,n}) + p_{2,i,n}
				\end{array} \right.\\
		x_{n+1} = \gamma \sum_{i=1}^mL_i^*(v_{i,n}-p_{2,i,n}) + \gamma (C x_n - C p_{1,n}) +p_{1,n}.		
		\end{array}
		\right.
	\end{align}
\end{algorithm}

\begin{theorem}\label{theorem-inclusion-strongly2}
In Problem \ref{opt-problem-inclusion-full} suppose that $A+C$ is $\rho$-strongly monotone with $\rho \in \R_{++}$, $B_i^{-1}+D_i^{-1}$ is $\tau_i$-strongly monotone with $\tau_i \in \R_{++}$ for $i=1,\ldots,m,$ and let $(\bx,\bv_1,\ldots,\bv_m) \in \h\times\fG$ be a primal-dual solution to Problem \ref{opt-problem-inclusion-full}. Then for every $n\geq 0$ it holds
\begin{align*}
	\| x_{n} - \bx \|^2 + \sum_{i=1}^m \| v_{i,n} - \bv_i \|^2 \leq  \left(\frac{1}{1+2\rho_{\min}\gamma(1-\gamma)}\right)^n \left( \| x_{0} - \bx \|^2 + \sum_{i=1}^m \| v_{i,0} - \bv_i \|^2\right),
\end{align*}
where $\rho_{\min}=\min\left\{\rho,\tau_1,\ldots,\tau_m\right\}$ and $x_n \in \h$ and $(v_{1,n},\ldots,v_{m,n}) \in \fG$ are the iterates generated by Algorithm \ref{alg3}.
\end{theorem}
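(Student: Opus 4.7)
The plan is to mirror the proof of Theorem \ref{theorem-inclusion} but with three adaptations: (i) replace the ``variable-step'' pair $(\gamma_n,\sigma_n)$ by the single constant step $\gamma$; (ii) carry through the operators $D_i^{-1}$ that were absent there; and (iii) combine the two strong monotonicity hypotheses by passing to the common constant $\rho_{\min}=\min\{\rho,\tau_1,\ldots,\tau_m\}$. First I would rewrite the resolvent steps in \eqref{A3} in the form
$$ \frac{x_n-p_{1,n}}{\gamma}-Cx_n-\sum_{i=1}^m L_i^*v_{i,n}+z \in A p_{1,n}, \quad \frac{v_{i,n}-p_{2,i,n}}{\gamma}+L_i x_n -D_i^{-1}v_{i,n}-r_i \in B_i^{-1} p_{2,i,n}, $$
then add $Cp_{1,n}$ and $D_i^{-1}p_{2,i,n}$ respectively and, using the updates for $x_{n+1}$ and $v_{i,n+1}$ exactly as in \eqref{inclusion-alg2-prelim}, conclude
$$ \frac{x_n-x_{n+1}}{\gamma} - \sum_{i=1}^m L_i^* p_{2,i,n} + z \in (A+C)p_{1,n},\qquad \frac{v_{i,n}-v_{i,n+1}}{\gamma}+L_i p_{1,n}-r_i \in (B_i^{-1}+D_i^{-1})p_{2,i,n}. $$
Since $(\bx,\bv_i)$ is primal-dual solution, it satisfies $z-\sum_i L_i^*\bv_i\in(A+C)\bx$ and $L_i\bx-r_i\in(B_i^{-1}+D_i^{-1})\bv_i$ (the latter from the parallel-sum identity).

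The next step is to pair each inclusion with the corresponding residual $p_{1,n}-\bx$ or $p_{2,i,n}-\bv_i$ and apply $\rho$-strong monotonicity of $A+C$ and $\tau_i$-strong monotonicity of $B_i^{-1}+D_i^{-1}$. After summation the cross terms $\langle p_{1,n}-\bx,\fL^*(\bv-\fp_{2,n})\rangle$ and $\langle \fp_{2,n}-\fbv,\fL(p_{1,n}-\bx)\rangle$ cancel, and bounding each of $\rho,\tau_i$ from below by $\rho_{\min}$ gives
$$ \big\langle p_{1,n}-\bx,\tfrac{x_n-x_{n+1}}{\gamma}\big\rangle + \sum_{i=1}^m \big\langle p_{2,i,n}-\bv_i,\tfrac{v_{i,n}-v_{i,n+1}}{\gamma}\big\rangle \geq \rho_{\min}\Big(\|p_{1,n}-\bx\|^2+\sum_{i=1}^m\|p_{2,i,n}-\bv_i\|^2\Big). $$
Applying the two polarization identities used on page 11 of the Theorem \ref{theorem-inclusion} proof (one for $x$, one for each $v_i$, all with step $\gamma$), together with $2ab\leq\gamma a^2+b^2/\gamma$ as there, I would obtain the analogue of \eqref{ineq-m2}, namely
\begin{align*}
	&\frac{\|x_n-\bx\|^2+\sum_i\|v_{i,n}-\bv_i\|^2}{2\gamma}\geq \frac{(1+2\rho_{\min}\gamma(1-\gamma))\big(\|x_{n+1}-\bx\|^2+\sum_i\|v_{i,n+1}-\bv_i\|^2\big)}{2\gamma} \\
	&+\frac{\|x_n-p_{1,n}\|^2+\sum_i\|v_{i,n}-p_{2,i,n}\|^2}{2\gamma}-\frac{(1+2\rho_{\min}(1-\gamma))\big(\|x_{n+1}-p_{1,n}\|^2+\sum_i\|v_{i,n+1}-p_{2,i,n}\|^2\big)}{2\gamma}.
\end{align*}

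The main obstacle, and the reason for the specific form of the step-size bound, is to show that the last line is nonnegative. Here I would invoke the Lipschitz-continuity estimate \eqref{ineq-lipschitz-continuity} (which applies verbatim with $\gamma_n\equiv\gamma$) to obtain
$$ \|x_{n+1}-p_{1,n}\|^2+\sum_{i=1}^m\|v_{i,n+1}-p_{2,i,n}\|^2 \leq \gamma^2\Big(\sqrt{\sum_{i=1}^m\|L_i\|^2}+\max\{\mu,\nu_1,\ldots,\nu_m\}\Big)^2\Big(\|x_n-p_{1,n}\|^2+\sum_{i=1}^m\|v_{i,n}-p_{2,i,n}\|^2\Big), $$
and combine with $1-\gamma\leq 1$: the assumed bound on $\gamma$ gives $(1+2\rho_{\min}(1-\gamma))\gamma^2(\sqrt{\sum\|L_i\|^2}+\max\{\mu,\nu_i\})^2\leq 1$, so the residual terms are indeed $\geq 0$. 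Consequently,
$$ \|x_{n+1}-\bx\|^2+\sum_{i=1}^m\|v_{i,n+1}-\bv_i\|^2 \leq \frac{1}{1+2\rho_{\min}\gamma(1-\gamma)}\Big(\|x_n-\bx\|^2+\sum_{i=1}^m\|v_{i,n}-\bv_i\|^2\Big), $$
and iterating this one-step contraction from $n=0$ yields the claimed linear rate.
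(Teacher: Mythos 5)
Your proposal is correct and follows essentially the same route as the paper's proof: reformulate the resolvent steps as inclusions for $(A+C)$ and $(B_i^{-1}+D_i^{-1})$, pass to the product space $\fH=\h\times\fG$ with the common constant $\rho_{\min}$, apply the polarization identities and the inequality $2ab\leq\gamma a^2+b^2/\gamma$, and control the residual terms via the Lipschitz estimate \eqref{ineq-lipschitz-continuity} together with the step-size bound. The only cosmetic difference is that the paper first relaxes $1+2\rho_{\min}(1-\gamma)$ to $1+2\rho_{\min}$ before invoking the bound on $\gamma$, which is exactly the $1-\gamma\leq 1$ step you perform at the end.
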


\begin{proof}
Taking into account the definitions of the resolvents occurring in Algorithm \ref{alg3} we obtain for every $n \geq 0$
\begin{align*}
	\frac{x_n-x_{n+1}}{\gamma} - \sum_{i=1}^m L_i^*p_{2,i,n} +z &\in (A+C)p_{1,n}
\end{align*}
and
\begin{align*}
	\frac{v_{i,n}-v_{i,n+1}}{\gamma} + L_i p_{1,n} - r_i &\in (B_i^{-1} + D_i^{-1}) p_{2,i,n}, \ i=1,\ldots,m.
\end{align*}
The primal-dual solution $(\bx,\bv_1,\ldots,\bv_m) \in \h\times\fG$ to Problem \ref{opt-problem-inclusion-full} fulfills (see \eqref{operator-proof-conditions-full})
\begin{equation*}
z - \sum_{i=1}^m L_i^*\bv_i \in A\bx + C\bx \ \mbox{and} \  \bv_i \in (B_i \Box D_i)(L_i\bx-r_i), \,i=1,\ldots,m.
\end{equation*}
By the strong monotonicity of $A+C$ and $B_i^{-1}+D_i^{-1}$, $i=1,\ldots,m$, we obtain for every $n \geq 0$
\begin{align}\label{ineq-sm1}
	\< p_{1,n} - \bx, \frac{x_n-x_{n+1}}{\gamma} - \sum_{i=1}^m L_i^*p_{2,i,n} +z - \left( z - \sum_{i=1}^m L_i^*\bv_i \right)  \> &\geq \rho \|  p_{1,n} - \bx\|^2
\end{align}
and, respectively,
\begin{align}\label{ineq-sm2}
	\< p_{2,i,n} - \bv_i, \frac{v_{i,n}-v_{i,n+1}}{\gamma} + L_i p_{1,n} - r_i - \left( L_i \bx -r_i \right)\> &\geq \tau_i \| p_{2,i,n} -\bv_i \|^2, i=1,...,m.
 \end{align}
Consider the Hilbert space $ \fH = \h \times \fG$, equipped with the inner product defined in \eqref{inprodfH} and associated norm, and set
$$ \fbx = (\bx,\bv_1,\ldots,\bv_m), \quad \fx_n=(x_n,v_{1,n},\ldots,v_{m,n}), \quad \fp_n=(p_{1,n},p_{2,1,n},\ldots,p_{2,m,n}).$$
Summing up the inequalities \eqref{ineq-sm1} and \eqref{ineq-sm2} and using
\begin{align*}
	\< \fp_n - \fbx , \frac{\fx_n - \fx_{n+1}}{\gamma} \> = \frac{\| \fx_{n+1} - \fp_{n} \|^2}{2\gamma} - \frac{\|\fx_n- \fp_{n}\|^2}{2\gamma} + \frac{\| \fx_n - \fbx \|^2}{2\gamma} - \frac{\| \fx_{n+1} - \fbx \|^2}{2\gamma},
\end{align*}
we obtain for every $n \geq 0$
\begin{align}
	\label{ineq-sm3}
	\frac{\| \fx_n - \fbx \|^2}{2\gamma} \geq \rho_{\min} \|  \fp_{n} - \fbx\|^2 + \frac{\| \fx_{n+1} - \fbx \|^2}{2\gamma} + \frac{\|\fx_n- \fp_{n}\|^2}{2\gamma} - \frac{\| \fx_{n+1} - \fp_{n} \|^2}{2\gamma}.
\end{align}
Further, using the estimate $2ab\leq \gamma a^2 + \frac{b^2}{\gamma}$ for all $a,b \in \R$,  we obtain
\begin{align*}
	\rho_{\min} \|  \fp_{n} - \fbx\|^2 &\geq \frac{2\rho_{\min}\gamma(1-\gamma)}{2\gamma} \|  \fx_{n+1} - \fbx\|^2 - \frac{2\rho_{\min}(1-\gamma)}{2\gamma} \|  \fx_{n+1} - \fp_{n}\|^2 \\
	&\geq \frac{2\rho_{\min}\gamma(1-\gamma)}{2\gamma} \|  \fx_{n+1} - \fbx\|^2 - \frac{2\rho_{\min}}{2\gamma} \|  \fx_{n+1} - \fp_{n}\|^2 \ \forall n \geq 0.
\end{align*}
Hence, \eqref{ineq-sm3} reduces to
\begin{align*}
	\frac{\| \fx_n - \fbx \|^2}{2\gamma} &\geq \frac{ (1+2\rho_{\min}\gamma(1-\gamma) )\| \fx_{n+1} - \fbx \|^2}{2\gamma} \\
	&\quad + \frac{\|\fx_n- \fp_{n}\|^2}{2\gamma} - \frac{(1+2\rho_{\min}) \| \fx_{n+1} - \fp_{n} \|^2}{2\gamma} \ \forall n \geq 0.
\end{align*}
Using the same arguments as in \eqref{ineq-lipschitz-continuity}, it is easy to check that for every $n \geq 0$
\begin{align*}
	&\frac{\|\fx_n- \fp_{n}\|^2}{2\gamma} - \frac{(1+2\rho_{\min}) \| \fx_{n+1} - \fp_{n} \|^2}{2\gamma} \\
	&\geq \left(1-(1+2\rho_{\min})\gamma^2\left(\sqrt{\sum_{i=1}^m \| L_i\|^2} + \max \left\{ \mu, \nu_1,\ldots,\nu_m \right\}\right)^2\right) \frac{\|\fx_n- \fp_{n}\|^2}{2\gamma}\\
&\geq 0,
\end{align*}
whereby the nonnegativity of this term is ensured by the assumption that
$$ \gamma \leq \frac{1}{\sqrt{1+2\rho_{\min}}\left(\sqrt{\sum_{i=1}^m \| L_i\|^2} + \max \left\{ \mu, \nu_1,\ldots,\nu_m \right\}\right)}. $$
Therefore, we obtain
\begin{align*}
	\| \fx_n - \fbx \|^2 \geq (1+2\rho_{\min}\gamma(1-\gamma) )\| \fx_{n+1} - \fbx \|^2 \ \forall n \geq 0,
\end{align*}
which leads to
 \begin{align*}
	\| \fx_{n} - \fbx \|^2 \leq  \left(\frac{1}{1+2\rho_{\min}\gamma(1-\gamma)}\right)^n \| \fx_0 - \fbx \|^2 \ \forall n \geq 0.
\end{align*}
\end{proof}

\section{Numerical experiments in imaging}\label{sectionApp}
In this section we test the feasibility of Algorithm \ref{alg1} and of its accelerated version Algorithm \ref{alg2} in the context of different problem formulations occurring in imaging and compare their performances to the ones of two other popular primal-dual algorithms introduced in \cite{ChaPoc11}. For all applications discussed in this section the images have been normalized, in order to make their pixels range in the closed interval from $0$ (pure black) to $1$ (pure white).

\subsection{TV-based image denoising}\label{subsectionDenoise}
Our first numerical experiment aims the solving of an image denoising problem via total variation regularization. More precisely, we deal with the convex optimization problem
\begin{align}
	\label{ex-denoise}
	\inf_{x\in\R^n} \left\{ \lambda \, TV(x) + \frac{1}{2} \|x-b\|^2 \right\},
\end{align}
where $\lambda\in\R_{++}$ is the regularization parameter, $TV:\R^n \rightarrow \R$ is a discrete total variation functional and $b\in\R^n$ is the observed noisy image.

In this context, $x \in \R^n$ represents the vectorized image $X\in\R^{M\times N}$, where $n = M\cdot N$ and $x_{i,j}$ denotes the normalized value of the pixel located in the $i$-th row and the $j$-th column, for $i=1,\ldots,M$ and $j=1,\ldots,N$. Two popular choices for the discrete total variation functional are the \textit{isotropic total variation} $TV_{\iso}:\R^n \rightarrow \R$,
\begin{align*}
	TV_{\iso}(x) &= \sum_{i=1}^{M-1}\sum_{j=1}^{N-1}\sqrt{ (x_{i+1,j}-x_{i,j})^2 + (x_{i,j+1}-x_{i,j})^2 } \\
								&\quad + \sum_{i=1}^{M-1} \left| x_{i+1,N}-x_{i,N} \right|  + \sum_{j=1}^{N-1} \left| x_{M,j+1}-x_{M,j} \right| ,
\end{align*}
and the \textit{anisotropic total variation} $TV_{\aniso}:\R^n \rightarrow \R$,
\begin{align*}
	TV_{\aniso}(x) &= \sum_{i=1}^{M-1}\sum_{j=1}^{N-1} \left|x_{i+1,j}-x_{i,j}\right| + \left|x_{i,j+1}-x_{i,j}\right| \\
								&\quad  + \sum_{i=1}^{M-1} \left| x_{i+1,N}-x_{i,N} \right| + \sum_{j=1}^{N-1} \left| x_{M,j+1}-x_{M,j} \right| ,
\end{align*}
where in both cases reflexive (Neumann) boundary conditions are assumed.

We denote $\Y=\R^n \times \R^n$ and define the linear operator $L:\R^n \rightarrow \Y$, $x_{i,j} \mapsto (L_1x_{i,j}, L_2x_{i,j})$, where
\begin{align*}
	L_1x_{i,j} = \left\{ \begin{array}{ll} x_{i+1,j}-x_{i,j}, & \text{if }i<M\\ 0, &\text{if }i=M\end{array}\right. \ \mbox{and} \
	L_2x_{i,j} = \left\{ \begin{array}{ll} x_{i,j+1}-x_{i,j}, & \text{if }j<N\\ 0, &\text{if }j=N\end{array}\right. .
\end{align*}
The operator $L$ represents a discretization of the gradient using reflexive (Neumann) boundary conditions and standard finite differences. One can easily check that $\| L \|^2 \leq 8$ and that its adjoint $L^* : \Y \rightarrow \R^n$ is as easy to implement as the operator itself (cf. \cite{Cha04}).

Within this example we will focus on the anisotropic total variation function which is nothing else than the composition of the $l_1$-norm in $\Y$ with the linear operator $L$. Due to the full splitting characteristics of the iterative methods presented in this paper, we need only to compute
the proximal point of the conjugate of the $l_1$-norm, the latter being the indicator function of the dual unit ball. Thus, the calculation of the proximal point will result in the computation of a projection, which has an easy implementation. The more challenging isotropic total variation functional is employed in the forthcoming subsection in the context of an image deblurring problem.

Thus, problem \eqref{ex-denoise} reads equivalently
$$ \inf_{x\in\R^n} \left\{ h(x) + g(Lx) \right\},$$
where $h: \R^n \rightarrow \R$, $h(x)=\frac{1}{2} \|x-b\|^2$, is $1$-strongly monotone and differentiable with $1$-Lipschitzian gradient and $g:\Y \rightarrow \R$ is defined as $g(y_1,y_2)=\lambda \|(y_1,y_2)\|_1$. Then its conjugate $g^*:\Y \rightarrow \oR$ is nothing else than
$$g^*(p_1,p_2) = \left(\lambda\|\cdot\|_1\right)^*(p_1,p_2)= \lambda \left\|\left(\frac{p_1}{\lambda}, \frac{p_2}{\lambda}\right)\right\|_1^* = \delta_{S}(p_1,p_2),$$
where $S=\left[-\lambda, \lambda\right]^n \times \left[-\lambda, \lambda\right]^n$. Taking $x_0 \in \h$, $v_0 \in \Y$,
$$	\gamma_0 \in  \left(0,\min\left\{1, \frac{\sqrt{1+4\rho}}{2(1+2\rho)\mu}\right\}\right)  \text{ and }
	  \sigma_0 =  \frac{1}{2\gamma_0(1+2\rho)\sum_{i=1}^m \|L_i \|^2},$$
Algorithm \ref{alg2} looks for this particular problem like
	\begin{align*}
	  \left(\forall n\geq 0\right) \   \left\lfloor \begin{array}{l}
		p_{1,n} = x_n - \gamma_n \left( x_n -b +L^* v_{n} \right) \\
		p_{2,n} = \proj_{S}\left(v_{n} +\sigma_n L x_n \right) \\
		v_{n+1} = \sigma_n L( p_{1,n} - x_n)  + p_{2,n} \\
		x_{n+1} = \gamma_n L^*(v_{n}-p_{2,n}) + \gamma_n(x_n - p_{1,n}) +p_{1,n} \\
		\theta_n=1/\sqrt{1+2\rho\gamma_n(1-\gamma_n)}, \ \gamma_{n+1} = \theta_n\gamma_n, \ \sigma_{n+1} = \sigma_n/\theta_n.
		\end{array}
		\right.
	\end{align*}
\begin{table}[tb]
	\centering
		\begin{tabular}{| l || l | l c | c l | l  | } \hline
		  & \multicolumn{2}{c}{$\sigma=0.12$, $\lambda=0.07$} & \hspace{0.1cm} & \hspace{0.1cm} & \multicolumn{2}{c|}{$\sigma=0.06$, $\lambda=0.035$} \\\cline{2-4}\cline{5-7}
		     & $\varepsilon=10^{-4}$ & $\varepsilon=10^{-6}$ &  &   &  $\varepsilon=10^{-4}$ & $\varepsilon=10^{-6}$ \\ \hline \hline
		ALG1 &  $350\ (7.03 \text{ s})$  & $2989\ (59.82 \text{ s})$  & & & $184\ (3.69 \text{ s})$  & $1454\ (29.07 \text{ s})$ \\
		ALG2 &  $101\ (2.28 \text{ s})$  &  $442\ (9.91 \text{ s})$     & & & $72\ (1.62 \text{ s})$ & $298\ (6.68 \text{ s})$\\
		PD1  & 	$342\ (3.59 \text{ s})$  &  $3133\ (32.68 \text{ s})$  & &	& $180\ (1.91 \text{ s})$	& $1427\ (14.87 \text{ s})$ \\
		PD2  &  $96\ (1.02 \text{ s})$  & $442\ (4.67 \text{ s})$      & & & $69\ (0.76 \text{ s})$ & $319\ (3.39 \text{ s})$ \\ \hline
	\end{tabular}
	\caption{\small Performance evaluation for the images in Figure \ref{fig:lichtenstein}. The entries represent to the number of iterations and the CPU times in seconds, respectively, needed in order to attain a root mean squared error for the iterates below the tolerance $\varepsilon$.}
	\label{table:performance}
\end{table}
However, we solved the regularized image denoising problem with Algorithm \ref{alg1}, the primal-dual iterative scheme from \cite{ChaPoc11} (see, also, \cite{Vu11}) and the accelerated version of the latter presented in \cite[Theorem 2]{ChaPoc11}, as well, and refer the reader to Table \ref{table:performance} for a comparison of the obtained results:
\begin{itemize}
\setlength{\itemsep}{-5pt}
	\item ALG1: Algorithm \ref{alg1} with $\gamma = \frac{1-\tilde\varepsilon}{\sqrt{8}}$, small $\tilde\varepsilon>0$ and by taking the last iterate instead of the averaged sequence.
	\item ALG2: Algorithm \ref{alg2} with $\rho=0.3$, $\mu=1$ and $\gamma_0=\frac{\sqrt{1+4\rho}}{2(1+2\rho)\mu}$.
	\item PD1: Algorithm 1 in \cite{ChaPoc11} with $\tau=\frac{1}{\sqrt{8}}$, $\tau \sigma 8 = 1$ and by taking the last iterate instead of the averaged sequence.
	\item PD2: Algorithm 2 in \cite{ChaPoc11} with $\rho=0.3$, $\tau_0=\frac{1}{\sqrt{8}}$, $\tau_0\sigma_0 8=1$.
\end{itemize}

From the point of view of the number of iterations, one can notice similarities between both the primal-dual algorithms ALG1 and PD1 and the accelerated versions ALG2 and PD2. From this point of view they behave almost equal. When comparing the CPU times, it shows that the methods in this paper need almost twice amount of time. This is since ALG1 and ALG2 lead back to a forward-backward-forward splitting, whereas PD1 and PD2 rely on a forward-backward splitting scheme, meaning that ALG1 and ALG2 process the double amount of forward steps than PD1 and PD2. In this example the evaluation of forward steps (i.\,e. which constitute in matix-vector multiplications involving the linear operators and their adjoints) is, compared with the calculation of projections when computing the resolvents, the most costly step.

\begin{figure}[tb]
	\floatbox[{\capbeside\thisfloatsetup{capbesideposition={right,top},capbesidewidth=4.2cm}}]{figure}[\FBwidth]
	{\captionsetup[subfigure]{position=top}
	\vspace{-0.3cm}
	\subfloat[Noisy image, $\sigma=0.06$]{\includegraphics*[viewport= 144 250 467 574, width=0.32\textwidth]{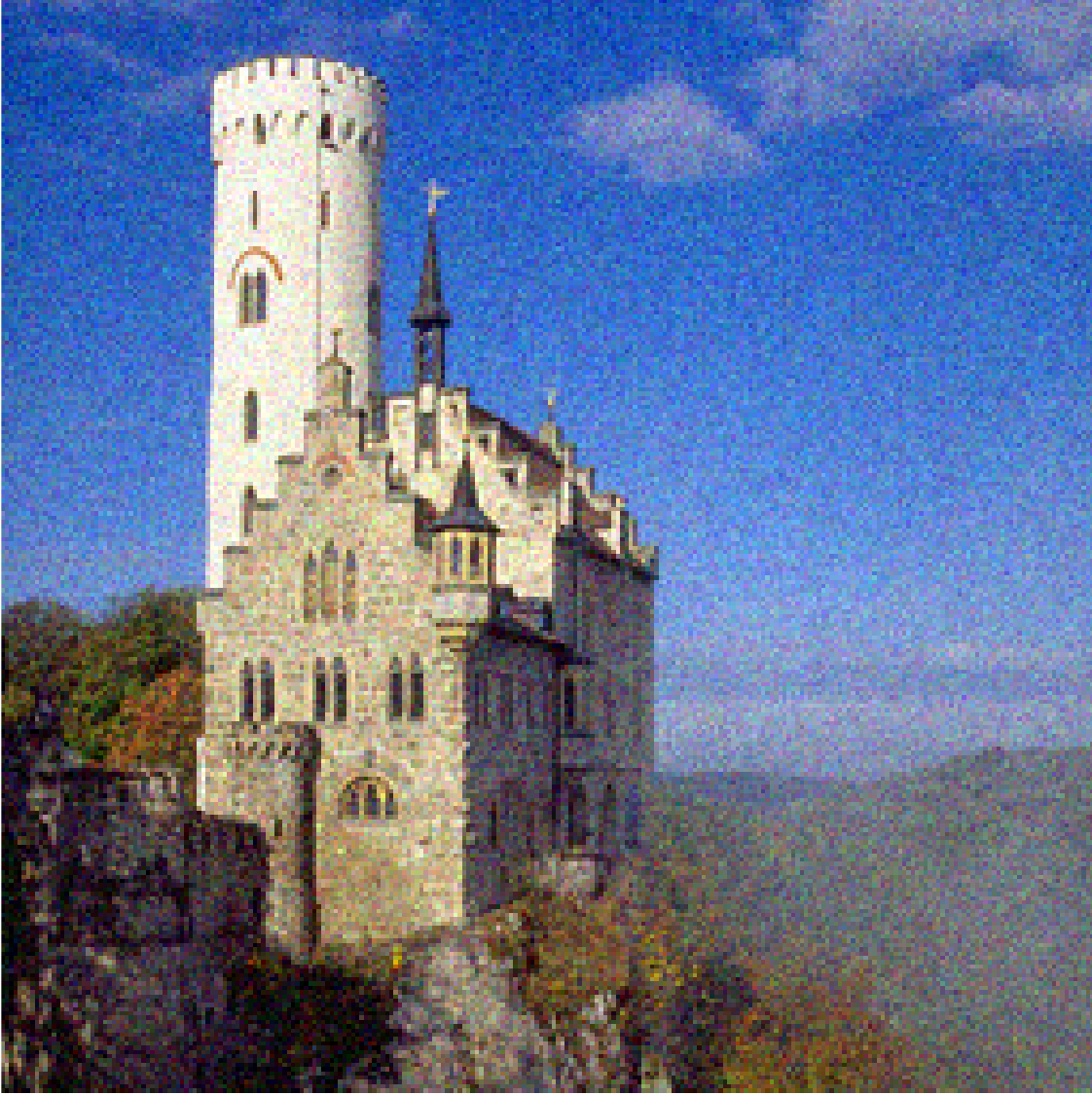}}  \hspace{0.2mm}
	\subfloat[Noisy image, $\sigma=0.12$]{\includegraphics*[viewport= 144 250 467 574, width=0.32\textwidth]{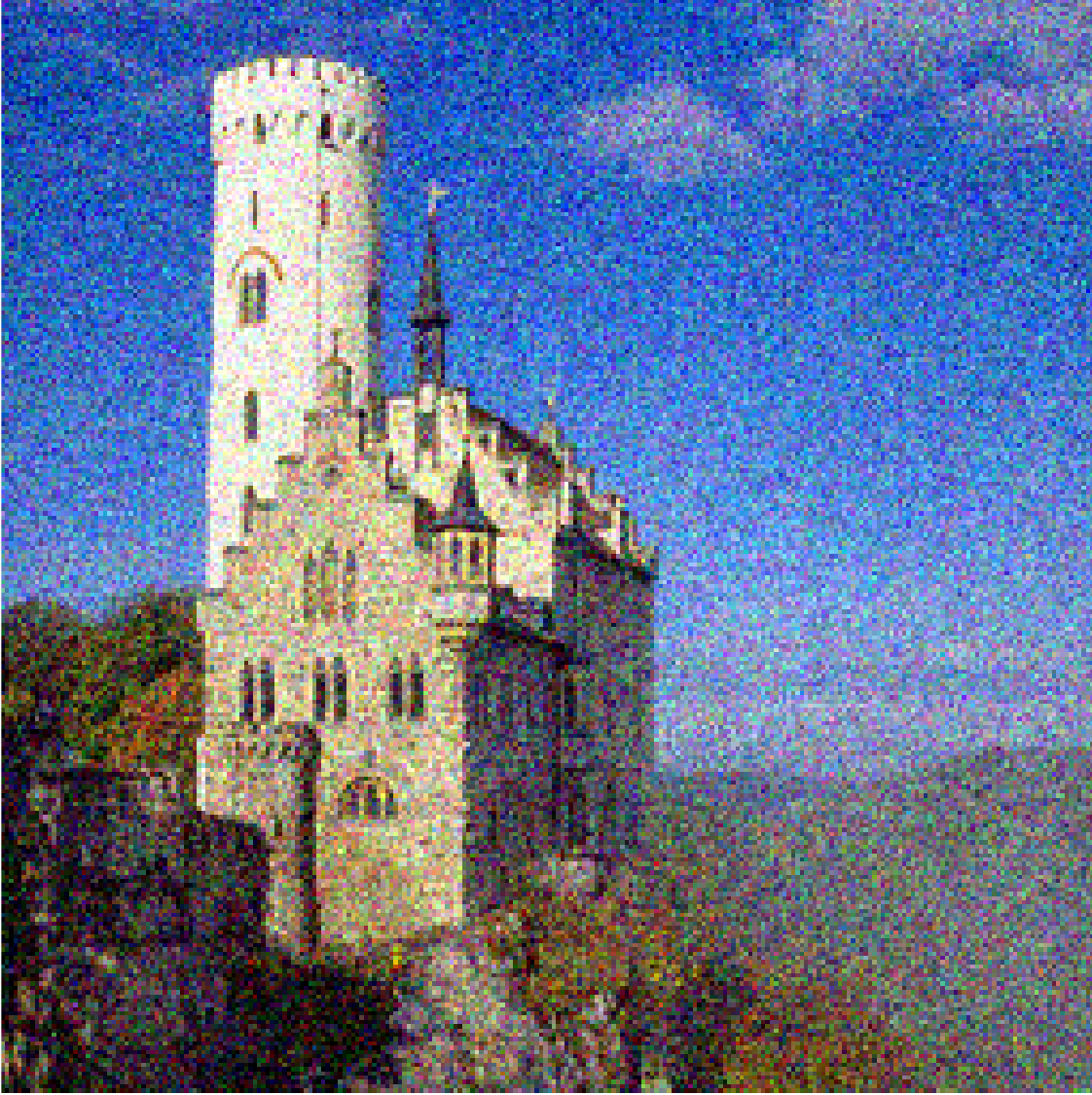}}
	
	\subfloat[Denoised image, $\lambda=0.035$]{\includegraphics*[viewport= 144 250 467 574, width=0.32\textwidth]{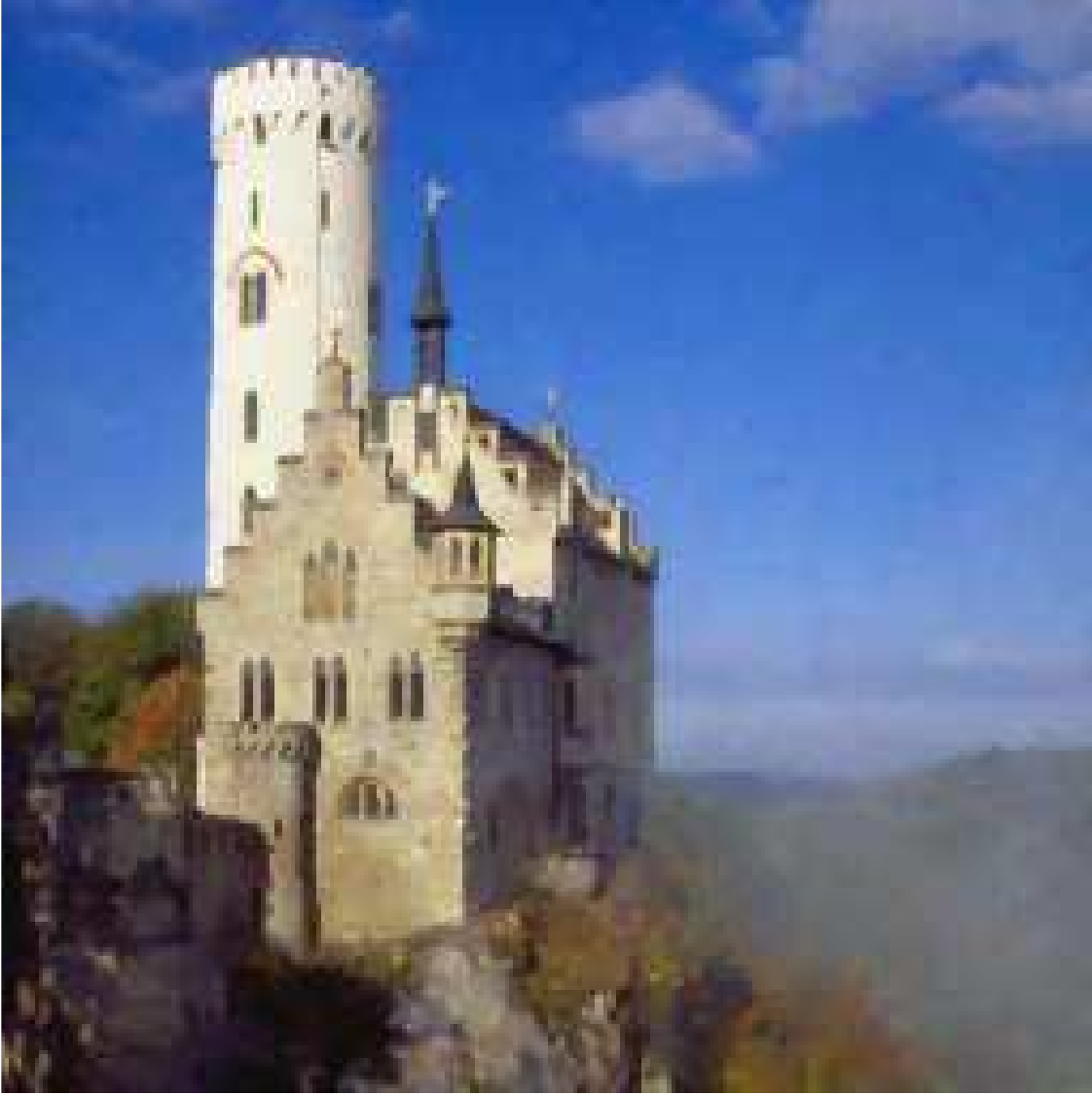}}	\hspace{0.2mm}
	\subfloat[Denoised image, $\lambda=0.07$]{\includegraphics*[viewport= 144 250 467 574, width=0.32\textwidth]{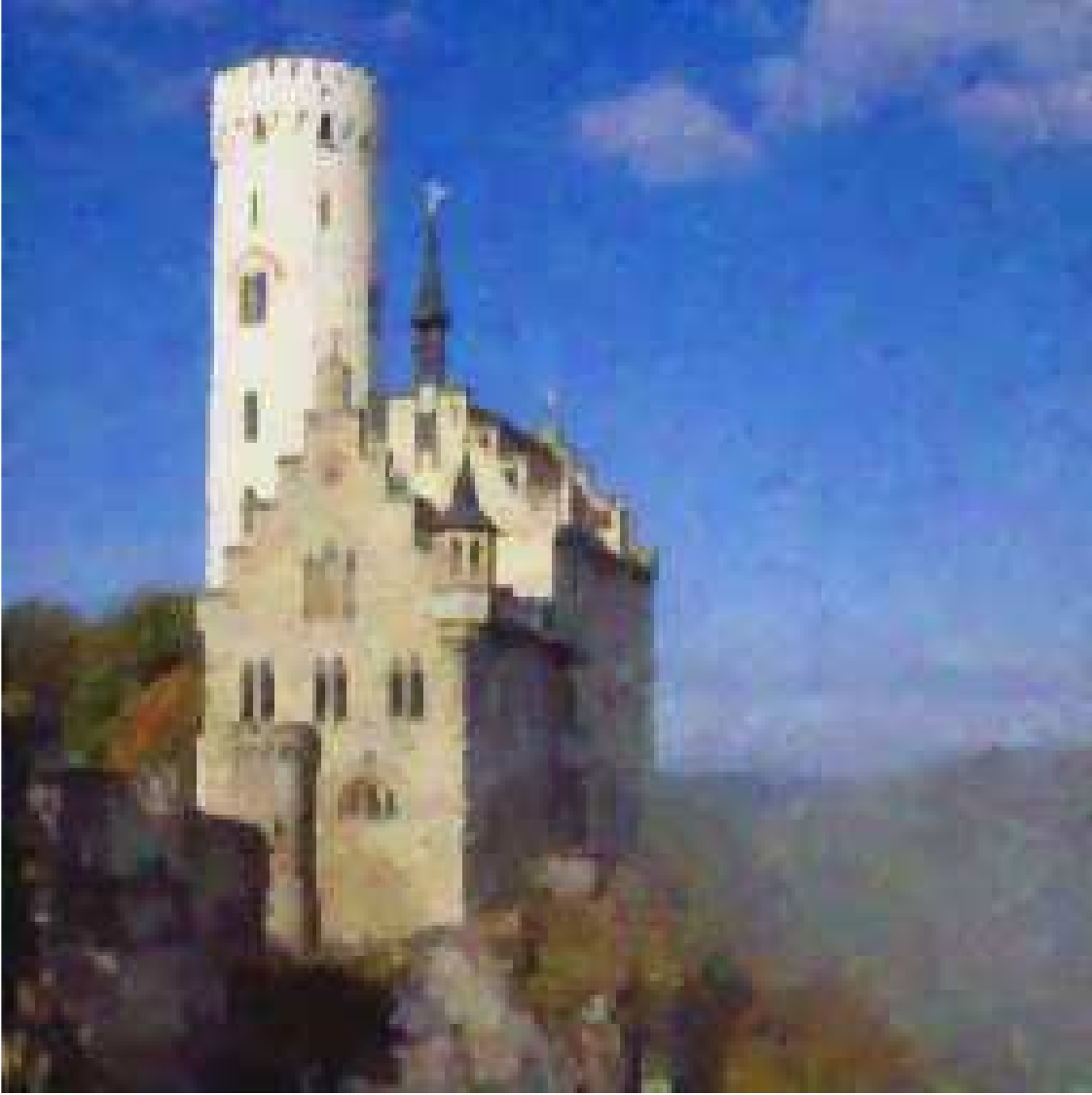}}	}
	{\hspace{-9cm}\caption{\small $TV$-$l_2$ image denoising. The noisy image in (a) was obtained after adding white Gaussian noise with standard deviation $\sigma=0.06$ to the original $256 \times 256$ lichtenstein test image, while the output of Algorithm \ref{alg2}, for  $\lambda=0.035$, after 100 iterations is shown in (c). Likewise, the noise image when choosing $\sigma=0.12$ and the output of the same algorithm, for  $\lambda=0.07$, after 100 iterations are shown in (b) and (d), respectively.}\label{fig:lichtenstein}}
\end{figure}

\subsection{\texorpdfstring{$TV$}{TV}-based image deblurring}\label{subsectionDeblur}

The second numerical experiment that we consider concerns the solving of an extremely ill-conditioned linear inverse problem which arises in image deblurring and denoising. For a given matrix $A \in \mathbb{R}^{n \times n}$ describing a blur operator and a given vector $b \in \R^n$ representing the blurred and noisy image, the task is to estimate the unknown original image $\bx\in\R^n$ fulfilling
$$A\bx=b.$$
To this end we basically solve the following regularized convex nondifferentiable problem
 \begin{equation}\label{probimageproc}
\inf_{x \in \R^n}{\left\{ \left\| Ax-b \right\|_1 +\lambda_1 TV_{\iso}(x) + \lambda_2 \left\| x \right\|_1 + \delta_{\left[0,1\right]^n}(x) \right\}},
\end{equation}
where $\lambda_1,\,\lambda_2 \in \R_{++}$ are regularization parameters and $TV_{\iso}:\R^n \rightarrow \R$ is the discrete isotropic total variation function. Notice that none of the functions occurring in \eqref{probimageproc} is differentiable, while the regularization is done by a combination of two regularization functionals with different properties.

The blurring operator is constructed by making use of the Matlab routines {\ttfamily imfilter} and {\ttfamily fspecial} as follows:
\lstset{language=Matlab}
\begin{lstlisting}[numbers=left,numberstyle=\tiny,frame=tlrb,showstringspaces=false]
H=fspecial('gaussian',9,4); % gaussian blur of size 9 times 9
                            % and standard deviation 4
B=imfilter(X,H,'conv','symmetric'); % B=observed blurred image
                                    % X=original image
\end{lstlisting}
The function {\ttfamily fspecial} returns a rotationally symmetric Gaussian lowpass filter of size $9 \times 9$ with standard deviation $4$,  the entries of $H$ being nonnegative and their sum adding up to $1$. The function {\ttfamily imfilter} convolves the filter $H$  with the image $X$ and furnishes the blurred image $B$. The boundary option ``symmetric'' corresponds to reflexive boundary conditions. Thanks to the rotationally symmetric filter $H$, the linear operator $A$ defined via the routine {\ttfamily imfilter} is symmetric, too. By making use of the real spectral decomposition of $A$, it shows that $\left\| A \right\|^2=1$.

For $(y,z),\,(p,q) \in \Y$, we introduce the inner product $$\< (y,z),(p,q) \> = \sum_{i=1}^M\sum_{j=1}^N y_{i,j}p_{i,j} + z_{i,j}q_{i,j}$$ and define $\| (y,z)\|_{\times} = \sum_{i=1}^M\sum_{j=1}^N \sqrt{y_{i,j}^2 + z_{i,j}^2}$. One can check that $\|\cdot\|_{\times}$ is a norm on $\Y$ and that for every $x\in\R^n$ it holds $TV_{\iso}(x) = \| L x \|_{\times}$, where $L$ is the linear operator defined in the previous section. The conjugate function $(\|\cdot\|_{\times})^*:\Y \rightarrow \oR$ of $\|\cdot\|_{\times}$ is for every $(p,q) \in \Y$ given by (see, for instance, \cite{BotGradWanka09})
$$ (\|\cdot\|_{\times})^* (p,q) = \left\{ \begin{array}{ll}0, & \text{if }\|(p,q)\|_{\times *} \leq 1 \\ +\infty, & \text{otherwise} \end{array}\right.,$$
where
$$\|(p,q)\|_{\times *} = \sup_{\|(y,z)\|_{\times} \leq 1} \< (p,q),(y,z) \> = \max_{\substack{1 \leq i \leq M \\ 1 \leq j \leq N}}\sqrt{p_{i,j}^2 + q_{i,j}^2}. $$
Therefore, the optimization problem \eqref{probimageproc} can be written in the form of
\begin{equation*}
\inf_{x \in \R^n}{\left\{f(x) + g_1(Ax) + g_2(Lx)\right\}},
\end{equation*}
where $f: \R^n \rightarrow \oR$, $f(x)=\lambda_2 \|x\|_1 + \delta_{\left[0,1\right]^n}(x)$, $g_1:\R^n \rightarrow \R$, $g_1(y)=\left\| y-b \right\|_1$ and $g_2:\Y \rightarrow \R$, $g_2(y,z)=\lambda_1 \left\| (y,z) \right\|_{\times}$. For every $p \in \R^n$ it holds $g_1^*(p)=\delta_{\left[ -1, 1\right]^n}(p)+p^Tb$ (see, for instance, \cite{Bot10}), while for any $(p,q)\in\Y$ we have $g_2^*(p,q)=\delta_{S}(p,q)$, with $S=\{(p,q) \in \Y : \|(p,q)\|_{\times *} \leq \lambda_1\}$.
\begin{figure}[tb]	
	\centering
	\captionsetup[subfigure]{position=top}
	\subfloat[Original image]{\includegraphics*[viewport= 144 250 467 574, width=0.32\textwidth]{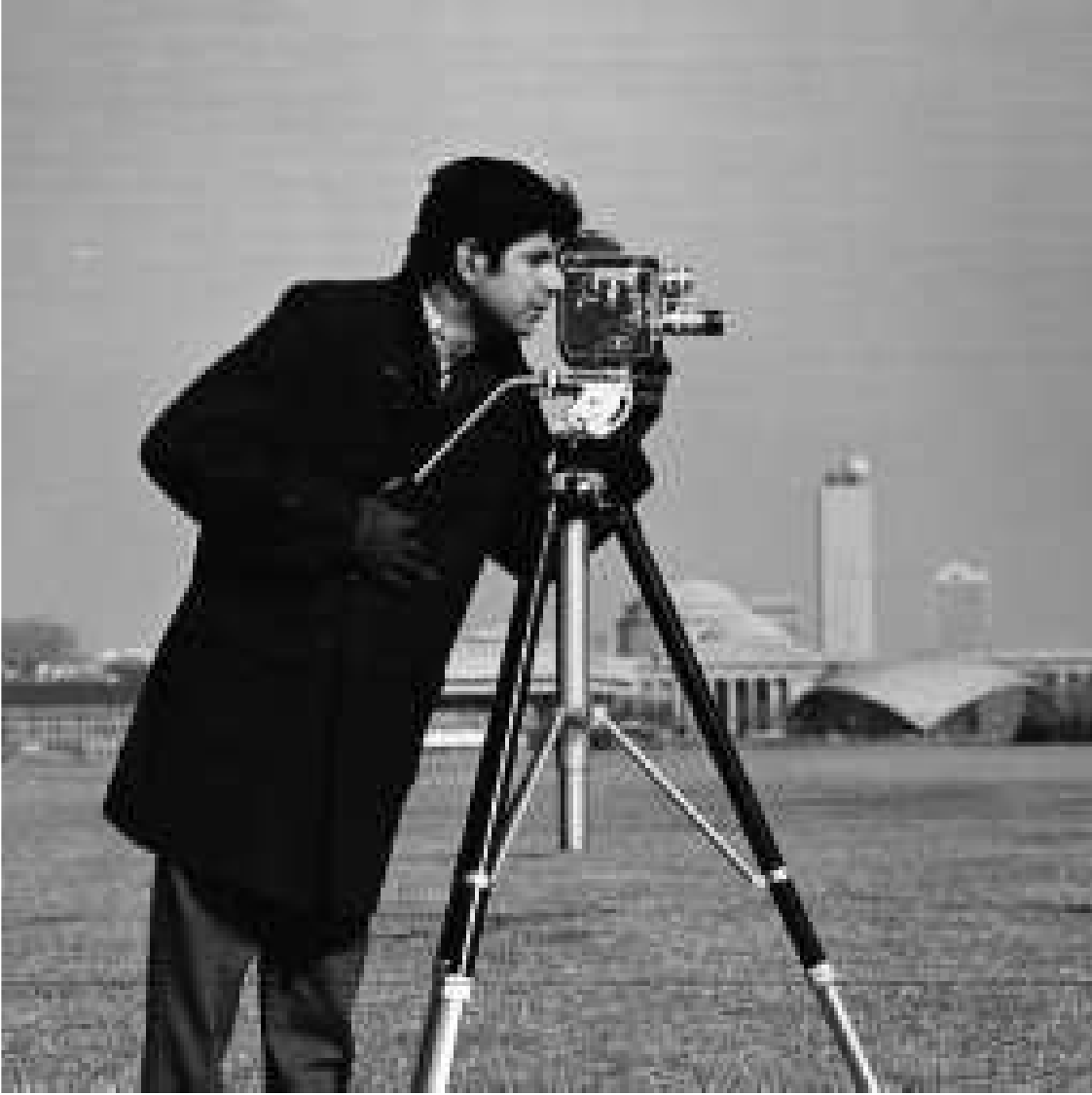}} \hspace{0.2mm}
	\subfloat[Blurred and noisy image]{\includegraphics*[viewport= 144 250 467 574, width=0.32\textwidth]{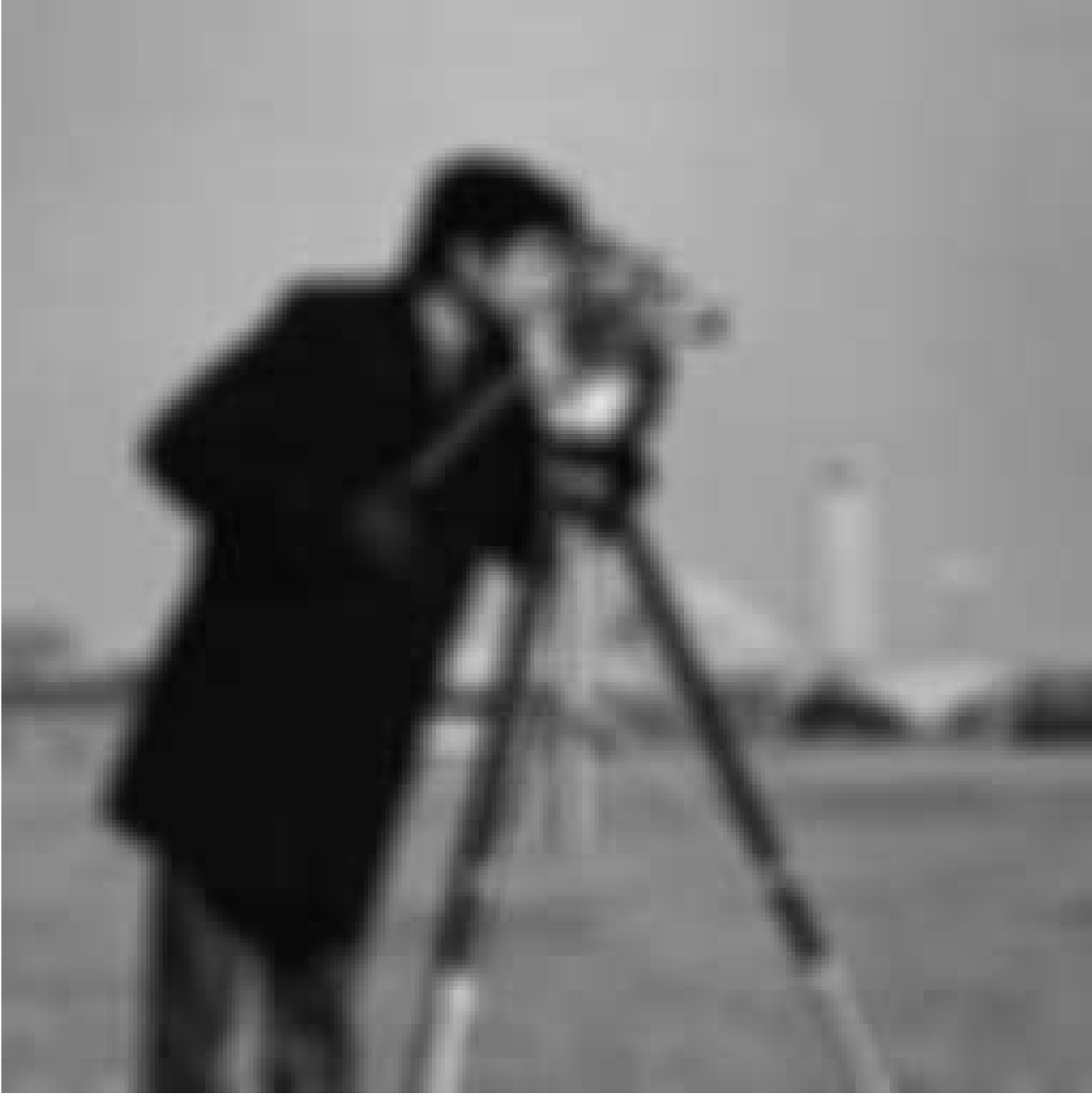}}  \hspace{0.2mm}
	\subfloat[Reconstructed image]{\includegraphics*[viewport= 144 250 467 574, width=0.32\textwidth]{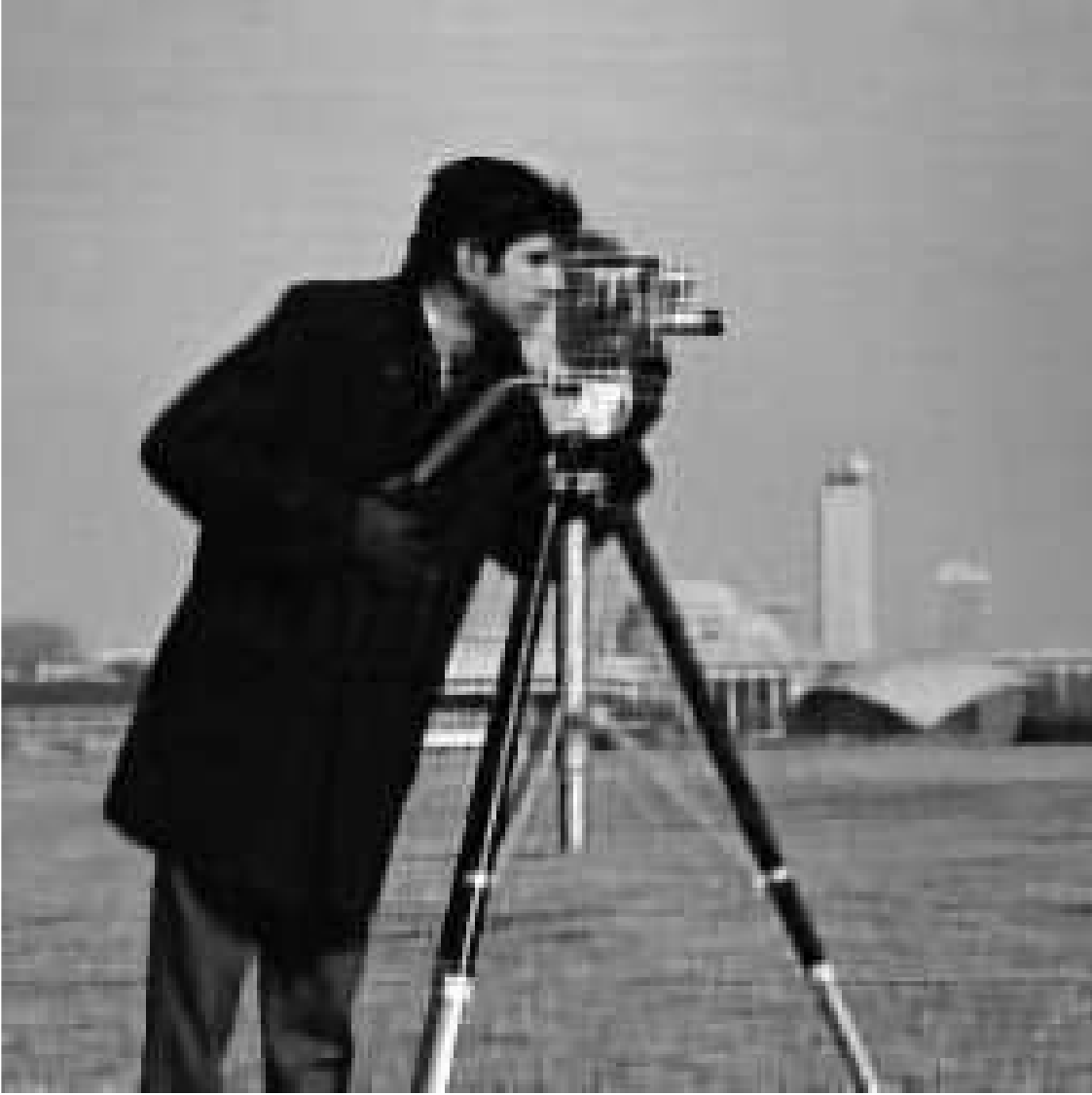}}	
	\caption{\small $TV$-$l_1$-$l_1$ image deblurring. Figure (a) shows the clean $256\times 256$ cameraman test image, (b) shows the image obtained after multiplying it with a blur operator and adding white Gaussian noise and (c) shows the averaged sequence generated by Algorithm  \ref{alg1} after 400 iterations.}
	\label{fig:cameraman}	
\end{figure}
We solved this problem by Algorithm \ref{alg1} and to this end we made use of the following formulae for the proximal points involved in the formulation of this iterative scheme:
\begin{align*}
	\Prox_{\gamma f}(x)&=\argmin_{z\in\left[0,1\right]^n}\left\{\gamma\lambda_2\|z\|_1 + \frac{1}{2}\|z-x\|^2\right\} = \proj_{\left[0,1\right]^n} \left(x-\gamma \lambda_2 \mathbbm{1}^n\right) \ \forall x\in\R^n,\\
	\Prox_{\gamma g_1^*}(p) &= \argmin_{z\in\left[-1,1\right]^n}\left\{ \gamma \< z,b\> + \frac{1}{2}\| z-p \|^2 \right\} = \proj_{\left[-1,1\right]^n} \left(p-\gamma b\right) \ \forall p\in\R^n,
\end{align*}
and
\begin{align*}
	\Prox_{\gamma g_2^*}(p,q)&=\argmin_{(y,z)\in S} \frac{1}{2}\|(y,z)-(p,q)\|^2 = \proj_{S} \left(p,q\right) \ \forall (p,q)\in\Y,
\end{align*}
where $\gamma \in \R_{++}$, $\mathbbm{1}^n$ is the vector in $\R^n$ with all entries equal to $1$ and the projection operator $\proj_S:\Y \rightarrow S$ is defined as
$$(p_{i,j},q_{i,j}) \mapsto \left(\frac{p_{i,j}}{\max\left\{1,\frac{\sqrt{p_{i,j}^2+q_{i,j}^2}}{\lambda_1}\right\}}, \frac{q_{i,j}}{\max\left\{1,\frac{\sqrt{p_{i,j}^2+q_{i,j}^2}}{\lambda_1}\right\}}\right).$$

Taking $x_0 \in \h$, $(v_{1,0},v_{2,0}) \in \R^n \times \Y$, $ \beta = \sqrt{1 + 8}=3$, $\varepsilon \in \left(0, \frac{1}{\beta +1} \right)$ and $(\gamma_n)_{n\geq 0}$ a nondecreasing sequence in $\left[ \varepsilon, \frac{1-\varepsilon}{\beta}\right]$,
Algorithm  \ref{alg1} looks for this particular problem like
	\begin{align*}
	  \left(\forall n\geq 0\right) \   \left\lfloor \begin{array}{l}
		p_{1,n} = \proj_{\left[0,1\right]^n} \left(x_n - \gamma_n \left( A^*v_{1,n} + L^* v_{2,n} + \lambda_2 \mathbbm{1}^n \right) \right) \\
		p_{2,1,n} = \proj_{\left[-1,1\right]^n} \left(v_{1,n} + \gamma_n (A x_n - b)\right) \\
		p_{2,2,n} = \proj_S \left(v_{2,n} +\gamma_n L x_n \right) \\
		v_{1,n+1} = \gamma_n A( p_{1,n} - x_n) + p_{2,1,n} \\
		v_{2,n+1} = \gamma_n L( p_{1,n} - x_n) + p_{2,2,n} \\
		x_{n+1} = \gamma_n (A^*(v_{1,n}-p_{2,1,n}) + L^*(v_{2,n}-p_{2,2,n})) +p_{1,n}.		
		\end{array}
		\right.
	\end{align*}
Figure \ref{fig:cameraman} shows the original cameraman test image, which is part of the image processing toolbox in Matlab, the image obtained after multiplying it with the blur operator and adding after that normally distributed white Gaussian noise with standard deviation $10^{-3}$ and the image reconstructed by Algorithm  \ref{alg1} when taking as regularization parameters $\lambda_1=3\text{e-}3$ and $\lambda_2=2\text{e-}5$.

\subsection{\texorpdfstring{$TV$}{TV}-based image inpainting}\label{subsectionInpaint}

In the last section of the paper we show how image inpainting problems, which aim for recovering lost information, can be efficiently solved via the primal-dual methods investigated in this work. To this end, we consider the following $TV$-$l_1$ model
\begin{align}\label{ex-inpaint}
	\inf_{x\in\R^n}{\left\{ \lambda TV_{\iso}(x) + \|Kx-b\|_1 + \delta_{\left[0,1\right]^n}(x)\right\}},
\end{align}
where $\lambda \in \R_{++}$ is the regularization parameter and $TV_{\iso}:\R^n \rightarrow \R$ is the isotropic total variation functional and $K \in \R^{n \times n}$ is the diagonal matrix, where for $i=1,...,n$, $K_{i,i} = 0$, if the pixel $i$ in the noisy image $b\in \R^n$ is lost (in our case pure black) and $K_{i,i} = 1$, otherwise. The induced linear operator $K : \R^n \rightarrow \R^n$ fulfills $\|K\|=1$, while, in the light of the considerations made in the previous two subsections, we have that $TV_{\iso}(x)=\|Lx\|_{\times}$  for all $x \in \R^n$.

Thus, problem \eqref{ex-inpaint} can be formulated as
$$ \inf_{x\in\R^n}{\left\{  f(x) + g_1(Lx) + g_2(Kx)\right\}}, $$
where $f:\R^n \rightarrow \oR$, $f(x)=\delta_{\left[0,1\right]^n}$, $g_1:\Y \rightarrow \R$, $g_1(y_1,y_2)=\|(y_1,y_2)\|_{\times}$ and $g_2:\R^n \rightarrow \R$, $g_2(y)=\|y-b\|_1$. We solve it by Algorithm \ref{alg1}, the formulae for the proximal points involved in this iterative scheme been already given in Subsection \ref{subsectionDeblur}. Figure \ref{fig:fruits} shows the original fruit image, the image obtained from it after setting to pure black $80$\% randomly chosen pixels and the image reconstructed by Algorithm \ref{alg1} when taking as regularization parameter $\lambda=0.05$.

\begin{figure}[tb]	
	\centering
	\captionsetup[subfigure]{position=top}
	\subfloat[Original image]{\includegraphics*[viewport= 136 254 476 572, width=0.32\textwidth]{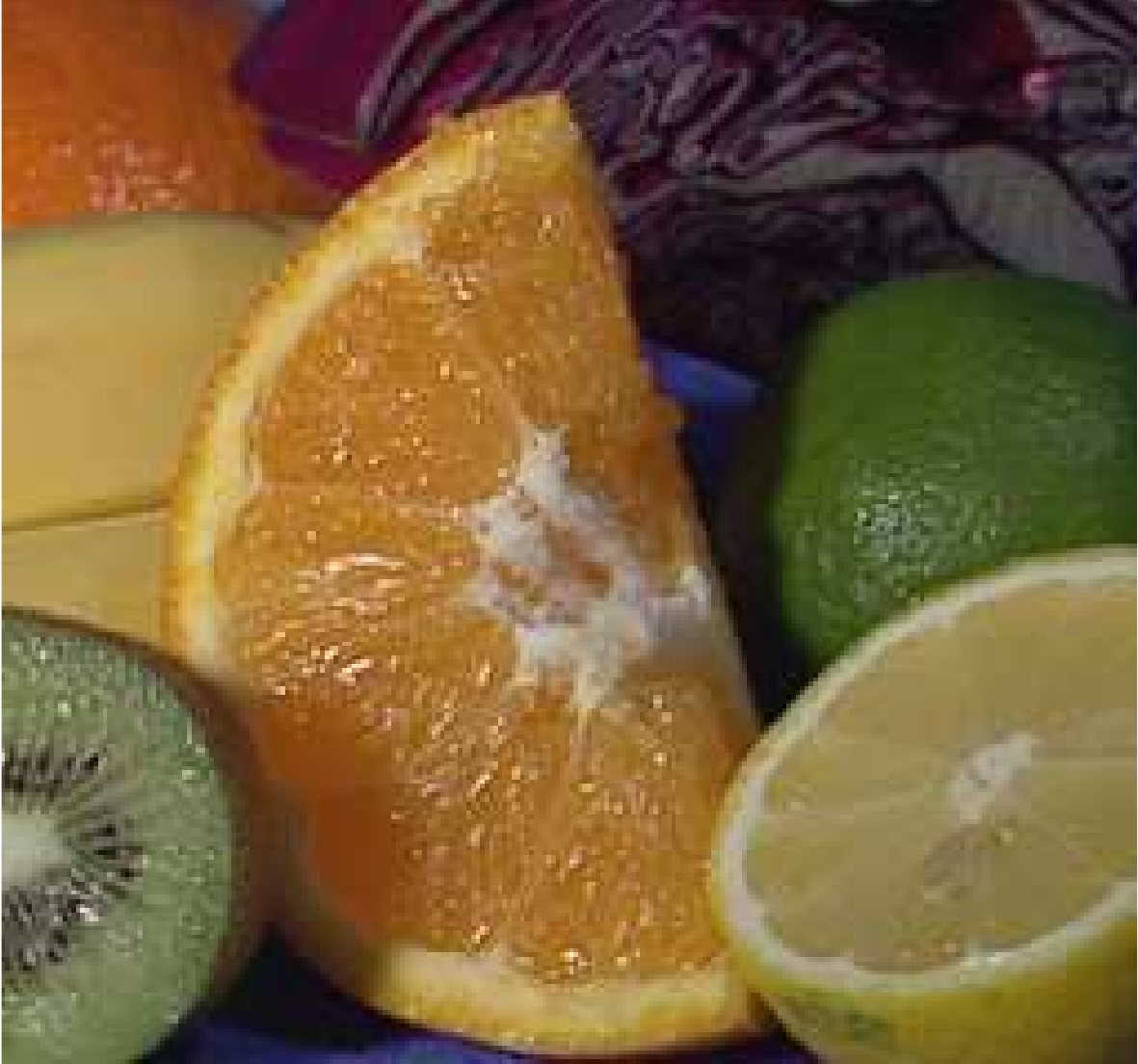}} \hspace{0.2mm}
	\subfloat[$80$\% missing pixels]{\includegraphics*[width=0.32\textwidth]{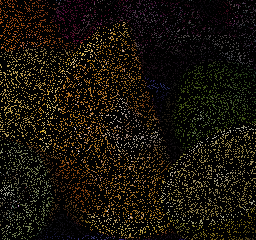}}  \hspace{0.2mm}
	\subfloat[Reconstructed image]{\includegraphics*[viewport= 136 254 476 572, width=0.32\textwidth]{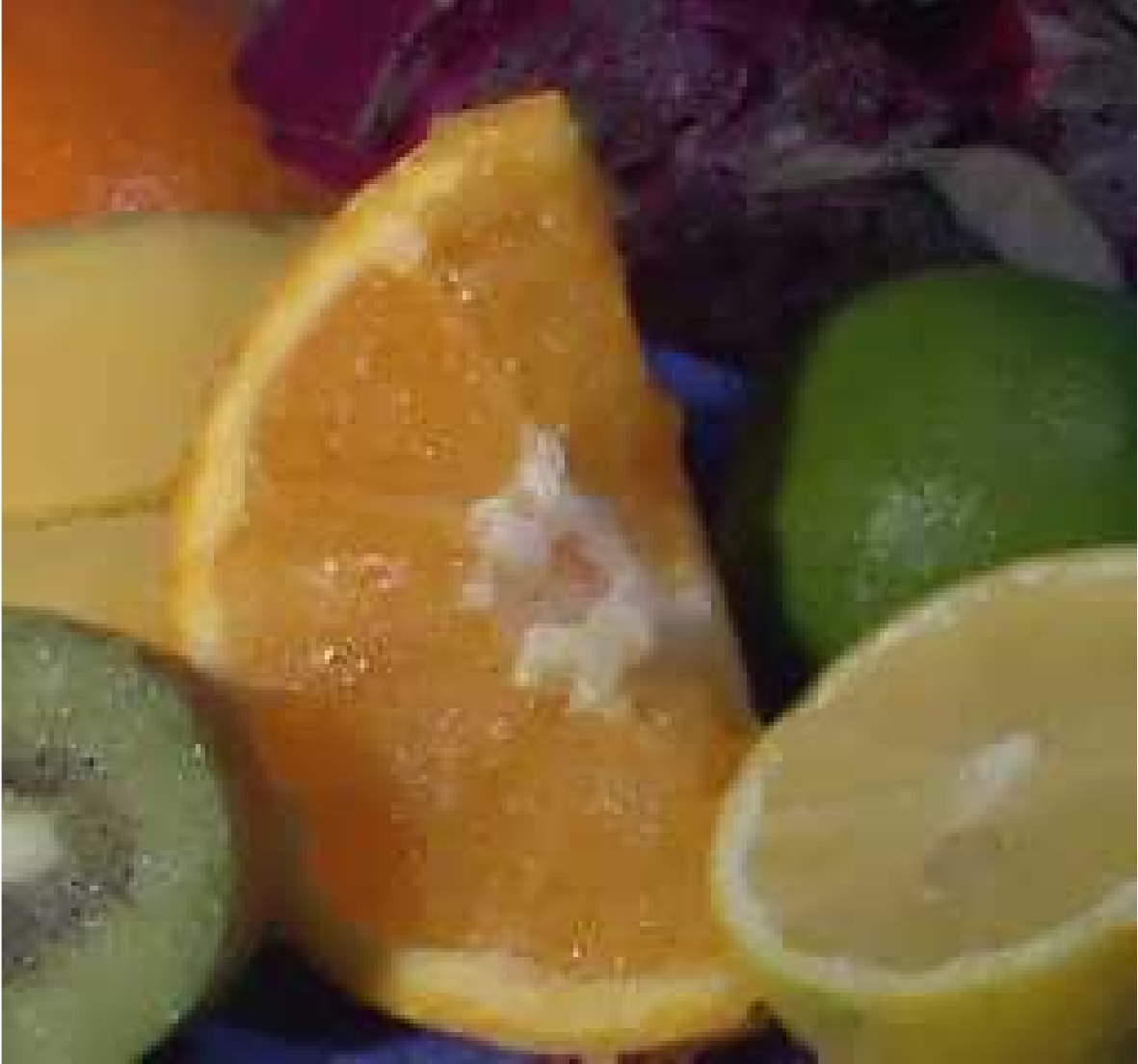}}	
	\caption{\small $TV$-$l_1$ image inpainting. Figure (a) shows the $240\times 256$ clean fruits image, (b) shows the same image for which $80$\% randomly chosen pixels were set to pure black and (c) shows the nonaveraged iterate generated by Algorithm \ref{alg1} after 200 iterations.}
	\label{fig:fruits}	
\end{figure}

\end{document}